\documentclass[10pt]{amsart}
\usepackage{amsmath, amscd, amssymb}
\usepackage{graphpap, color}
\usepackage[mathscr]{eucal}
\usepackage{mathrsfs}
\usepackage{pstricks}
\usepackage{color}
\usepackage{cancel}
\usepackage[mathscr]{eucal}
\usepackage{stmaryrd}

\usepackage[pagebackref]{hyperref}

\usepackage[T1]{fontenc}
\usepackage{textcomp}

\usepackage{cite}
\usepackage[all,cmtip]{xy}

\numberwithin{equation}{section}

\newcommand{\CC}{\mathbb{C}}

\newcommand{\RR}{\mathbb{R}}
\newcommand{\ZZ}{\mathbb{Z}}

\def\cln{\colon}

\newcommand{\cal}{\mathcal}

\def\cE{{\cal E}}

\def\cO{{\cal O}}
\def\cP{{\cal P}}

\def\cX{{\cal X}}

\def\begeq{\begin{equation}}
\def\endeq{\end{equation}}
\def\and{\quad{\rm and}\quad}

\def\and{\quad\text{and}\quad}

\newtheorem{prop}{Proposition}[section]
\newtheorem{theo}[prop]{Theorem}
\newtheorem{lemm}[prop]{Lemma}
\newtheorem{coro}[prop]{Corollary}

\newtheorem{defi}[prop]{Definition}

\newtheorem{defi-prop}[prop]{Definition-Proposition}
\newtheorem{quest}[prop]{Question}

\def\dbar{\overline{\partial}}

\def\beq{\begin{equation}}
\def\eeq{\end{equation}}

\def\bee{\begin{equation}}
\def\eeq{\end{equation}}

\begin{document}
\title{Locally rigid implies globally rigid in K\"ahler geometry}

\thanks{The author is supported by NSFC (No. 12271073 and 12271412).}
\thanks{Keywords: deformation rigidity, isotrivial,  smooth family, K\"ahler morphism}
\thanks{MSC(2010): 14D15, 14E30, 14J10}

\author{Mu-lin Li}
\address{School of Mathematics, Hunan University, China}
\email{mulin@hnu.edu.cn}

\date{\today}

\begin{abstract}
In this paper, we study the rigidity properties of compact K\"ahler manifolds. Given a smooth family of compact K\"ahler manifolds $\cX$ over the unit disk $\Delta$, we show that all the fibers are mutually isomorphic if the family is locally trivial at a point $t_1\in\Delta$ and the fiber $\cX_{t_1}$ is non-uniruled.  This proves that the locally rigid implies the global rigid in K\"ahler. It can also be used to prove the so called global non-deformability for non-uniruled K\"ahler manifolds under K\"ahler morphisms.
\end{abstract}
\maketitle

\section{introduction}
Two compact complex manifolds $X$ and $X'$ are direct deformations of each other if and only if there is a proper smooth holomorphic map
\beq\nonumber
\pi:\cX\to\Delta,
\eeq
where $\Delta\subset \CC$ is the unit disk, and where $X$, respectively $X'$ are isomorphic to fibers of $\pi$.  Two compact complex manifolds $X$ and $X'$ are deformation equivalent if and only if there is a sequence of compact complex manifolds $\{X_i\}_{i=0}^k$ such that $X_0=X$, $X_k=X'$ and each $X_i$ is a direct deformation of $X_{i-1}$. Equivalently, $X$ and $X'$ are deformation equivalent if there exists a proper smooth morphism $f:\cX\to B$ between complex manifolds such that there exist two points $b, b'\in B$ with $\cX_b\cong X$ and $\cX_{b'}\cong X'$.

For a compact complex manifold we have the following three types of rigid
\begin{itemize}
\item A compact complex manifold $X$ is said to be globally rigid if for every compact complex manifold $X'$ that is deformation equivalent to $X$, we have an isomorphism $X\cong X'$.
\item A compact complex manifold $X$ is said to be locally rigid if for each deformation of $X$,
\beq\nonumber
\pi:\cX\to B
\eeq
there is an open neighbourhood  $U\subset B$ of $b$ such that $\cX_t=\pi^{-1}(t)\cong X$ for all $t\in U$.
\item A compact complex manifold $X$ is said to be infinitesimally rigid if
\beq\nonumber
H^1(X,T_X)=0,
\eeq
where $T_X$ is the tangent bundle of $X$.
\end{itemize}
From the Kodaira-Spencer-Kuranishi theory we know that if $X$ is infinitesimally rigid, then $X$ is also locally rigid. Because the existence of a moduli space for minimal surfaces of general type \cite{Gi77}, the authors of \cite[Theorem 2.7]{BC} proved that locally rigidity implies globally rigidity when $X$ is a minimal surface of general type. It is natural to ask the following question.
\begin{quest}
Suppose that $X$ is a compact non-uniruled K\"ahler manifold. Is it true that
$X$ is globally rigid whenever it is locally rigid?
\end{quest}

In this paper we consider the above question in K\"ahler geometry. First we introduce the following definitions

Two compact complex manifolds $X$ and $X'$ are direct deformations of each other in K\"ahler if and only if there is a proper smooth holomorphic map
\beq\nonumber
\pi:\cX\to\Delta,
\eeq
where $\Delta\subset \CC$ is the unit disk, and where $X$, respectively $X'$, are isomorphic to fibers of $\pi$ with  all the fibers $\pi^{-1}(t)$ being K\"ahler manifolds.  Two compact complex manifolds $X$ and $X'$ deformation equivalent  in K\"ahler  if and only if there is a sequence of compact complex manifolds $\{X_i\}_{i=0}^k$ such that $X_0=X$, $X_k=X'$ and each $X_i$ is a direct deformation of $X_{i-1}$ in K\"ahler.
\begin{defi}A compact complex K\"ahler manifold $X$ is said to be globally rigid in K\"ahler if for every compact complex K\"ahler manifold $X'$ that is deformation equivalent to $X$ in K\"ahler, we have an isomorphism $X\cong X'$.
\end{defi}
With the help of variations of Hodge structures, we prove the following theorem
\begin{theo}\label{res1} Let $\cX$ be a complex manifold, and let $\pi\cln \cX\to \Delta=\{t\in\CC||t|<1\}$ be a proper smooth holomorphic map such that all the fibers are K\"ahler manifolds. Suppose that the family is locally trivial at a  point $t_1\in\Delta$(or there exists a small neighborhood $V_{t_1}$ of $t_1\in\Delta$ such that $\pi^{-1}(V_{t_1})\cong \cX_{t_1}\times V_{t_1}$). Then  $\cX_t\cong \cX_{t_1}$ for all $t\in\Delta$, provided that  $\cX_{t_1}$ is a non-uniruled K\"ahler manifold.
\end{theo}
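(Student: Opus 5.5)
The plan is to show that the set
\[
S=\{t\in\Delta \mid \pi \text{ is locally trivial near } t\}
\]
is open, closed and nonempty in $\Delta$. Since $\Delta$ is connected, this gives $S=\Delta$, and then all fibers are isomorphic to $\cX_{t_1}$. Openness is built into the definition, and $t_1\in S$ by hypothesis. The whole difficulty is closedness, i.e.\ showing that a limit point $t_0\in\overline{S}$ again has a neighbourhood over which the family is trivial. Since $\overline{S}\cap\Delta$ is closed, it suffices to show that the fibers over the connected component $S_0$ of $S$ containing $t_1$ are all isomorphic to $\cX_{t_1}$ (clear by connectedness of $S_0$ and local triviality), and that this persists to any boundary point $t_0\in\partial S_0\cap\Delta$.

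\emph{Step 1 (Hodge theory).} Fix $C^\infty$ trivializations $\cX\simeq X\times\Delta$ by Ehresmann, where $X$ is the underlying differentiable manifold. The period map of the weight-$k$ Hodge structures on $H^k(X,\ZZ)$ is holomorphic on $\Delta$. It is constant on the open set $S_0$, hence constant on all of $\Delta$ by the identity theorem. In particular, the Hodge structures of $\cX_{t_0}$ and $\cX_{t_1}$ are identified via parallel transport. This holds for $H^2$, so a Kähler class $[\omega_{t_1}]$ on $\cX_{t_1}$ transports to a real $(1,1)$-class $\alpha_t$ on every fiber. I will also use the graph picture. For $t\in S_0$ choose biholomorphisms $f_t\colon \cX_{t_1}\to\cX_t$, and consider the graphs $\Gamma_t\subset \cX_{t_1}\times\cX$. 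These are cycles lying in the fibers of $\cX_{t_1}\times\cX\to\Delta$.

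\emph{Step 2 (bounded volume, limit cycle).} I want to take a limit of the graphs $\Gamma_{t_n}$ as $t_n\to t_0$. The monodromy acts on the integral lattice, and the action of $f_t$ on cohomology preserves the (constant) Hodge structure and the Kähler cone of $\cX_{t_1}$ up to transport. The goal is to choose $f_{t_n}$ so that the induced cohomology action is the parallel transport composed with an element of a fixed finite group; automorphisms of a Kähler manifold act through a group whose image preserving a Kähler class is finite modulo $\mathrm{Aut}^0$. This bounds the volumes of $\Gamma_{t_n}$ with respect to a relative Kähler form on $\cX_{t_1}\times\cX$ near $t_0$. Bishop's theorem (Barlet space compactness in the Kähler setting) then gives a subsequential limit cycle $\Gamma_{t_0}=\Gamma+\sum_i m_iZ_i\subset\cX_{t_1}\times\cX_{t_0}$. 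Here $\Gamma$ is the unique component dominating $\cX_{t_1}$ with degree one, by cohomology of the first projection, and it also maps with degree one onto $\cX_{t_0}$. Thus $\Gamma$ is the graph of a bimeromorphic map $g\colon\cX_{t_1}\dashrightarrow\cX_{t_0}$.

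\emph{Step 3 (main obstacle: no extra components).} One must show that the $Z_i$ vanish and that $g$ is biregular. The components $Z_i$ project to proper subvarieties of one factor. Arguing as in the Matsusaka--Mumford and Kollár approach, I would show that if $g$ fails to be an isomorphism, the limit cycle contains components lying over exceptional loci. These produce families of curves sweeping out the exceptional divisor. Concretely, $\cX_{t_0}$ (or $\cX_{t_1}$) would be covered by rational curves in those loci, and by deformation of the graphs nearby fibers would become uniruled. The point is that a component $Z_i\subset E\times F$, with $E$ and $F$ of complementary behaviour, forces positive-dimensional fibers of the projection that are rationally chain connected, because they are fibers of a bimeromorphic morphism between smooth spaces (Abhyankar/Zariski: exceptional loci of birational maps to smooth targets are ruled). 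Moreover, the union of the images must be the whole of $\cX_{t_1}$, since $\Gamma_{t_n}$ are graphs of isomorphisms and the limit has the class of a graph. I expect this step to be the hard one, and I would try first to use the non-uniruledness of $\cX_{t_1}$ together with the fact, from Step 1, that $g^*$ must equal the transported Hodge isometry. The latter preserves the Kähler cone, so $g$ pulls a Kähler class back to a Kähler class. A bimeromorphic map between non-uniruled compact Kähler manifolds that is an isomorphism in codimension one and sends some Kähler class to a Kähler class is biregular, by a Fujiki/Huybrechts-type argument. This gives $\cX_{t_0}\cong\cX_{t_1}$. Finally, $\cX_{t_0}$ is then non-uniruled and local triviality at $t_0$ follows by applying the same reasoning in families (Fischer--Grauert near $t_0$, using that all nearby fibers are isomorphic on both sides of $\partial S_0$). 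So $t_0\in S$, and $S$ is closed.
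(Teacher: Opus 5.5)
\textbf{Verdict: there is a genuine gap, in Step 3.} Your Steps 1 and 2 are essentially fine. You can even choose $f_{t_n}$ so that $f_{t_n}^*$ is exactly parallel transport, by continuing $f_{t_1}=\mathrm{id}$ along paths in $S_0$; $\Delta$ is simply connected, so no finite group is needed. Step 3 carries all the content, and the mechanism you propose there does not work, for four reasons.

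\emph{Ruled exceptional loci do not give uniruledness.} Exceptional loci of bimeromorphic maps onto smooth targets are ruled, but they are proper subvarieties. So they never make $\cX_{t_0}$ or $\cX_{t_1}$ uniruled, and rational curves in $\cX_{t_0}$ need not deform to nearby fibers. Non-uniruledness alone cannot exclude the $Z_i$. In Atiyah's flop example (two K3 families differing by a flop of a $(-2)$-curve in the central fiber), the limit of graphs of isomorphisms between non-uniruled fibers is $\Gamma_\varphi+C\times C'$.

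\emph{Where non-uniruledness actually enters.} What it really gives, via ruledness of the exceptional locus of the total graph family over the smooth $\cX$, is only that the component dominating $\cX_{t_1}$ also dominates $\cX_{t_0}$ bimeromorphically. That is not a cohomological fact, as your Step 2 suggests.

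\emph{Your fallback is circular.} The limit of $f_{t_n}^*$ is the action of the whole cycle, $g^*+\sum m_i[Z_i]^*$. It equals $g^*$ only once you know the $Z_i$ are absent. You also never show that $g$ is an isomorphism in codimension one.

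\emph{The K\"ahler cone claim is unproved.} You assert that transport carries the K\"ahler cone of $\cX_{t_0}$ to that of $\cX_{t_1}$, but a constant Hodge structure does not by itself control K\"ahler cones at a boundary point.

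\textbf{The missing idea is a polarization chosen at $t_0$.} Take a K\"ahler class $\beta_0$ on $\cX_{t_0}$ and its flat extension $\beta$. By Step 1, $\beta$ is of type $(1,1)$ on every fiber. By stability it is K\"ahler on $\cX_{t_n}$ for $n\gg 0$, so $\beta_1:=f_{t_n}^*\beta_{t_n}$, the transport of $\beta_0$, is K\"ahler on $\cX_{t_1}$.

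Then $\int_{\Gamma_{t_0}}(p_1^*\beta_1+p_2^*\beta_0)^n=2^n\beta_1^n$. Khovanskii--Teissier applies on a resolution of $\Gamma$, which is bimeromorphic to both factors, and with $\beta_0^n=\beta_1^n$ it gives $\int_{\Gamma}(p_1^*\beta_1+p_2^*\beta_0)^n\ge 2^n\beta_1^n$. Since $p_1^*\beta_1+p_2^*\beta_0$ is K\"ahler on the product, each $Z_i$ has positive volume, so all $Z_i=0$. Biregularity of $g$ then follows as in Fujiki's argument.

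This is exactly the separatedness of the Fujiki--Schumacher moduli space $M$ of polarized non-uniruled K\"ahler manifolds, which the paper uses as a black box. It builds such a polarized family around $t_0$ (Lemma~\ref{lemm4}, Theorem~\ref{main-step}). The holomorphic moduli map to $M$ is constant on an open set where the family is holomorphically trivial, hence constant near $t_0$ by the identity theorem.

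That identity-theorem step is also what your final sentence lacks. Knowing $\cX_t\cong\cX_{t_1}$ on $\overline{S_0}$ says nothing about points near $t_0$ outside $\overline{S_0}$. To get a full neighborhood of $t_0$ you need some analyticity: either the moduli map, or properness of the relevant component of the relative isomorphism space, so that its image is an analytic set containing an open set.
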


\begin{coro}
Suppose that $X$ is a compact non-uniruled K\"ahler manifold. Then $X$ is globally rigid in K\"ahler if it is locally rigid.
\end{coro}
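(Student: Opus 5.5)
The corollary follows from Theorem~\ref{res1} by induction along a chain of direct deformations. The one input not in the paper is the Fischer--Grauert theorem, which converts "fibres isomorphic near a point" into "locally trivial near that point".

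Let $X$ be a compact non-uniruled K\"ahler manifold that is locally rigid, and let $X'$ be a compact K\"ahler manifold deformation equivalent to $X$ in K\"ahler. By definition there is a chain $X_0=X, X_1,\dots ,X_k=X'$ in which each $X_i$ is a direct deformation of $X_{i-1}$ in K\"ahler. I will prove by induction on $i$ that $X_i\cong X$; the case $i=k$ is the claim. The case $i=0$ is trivial.

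For the inductive step, assume $X_{i-1}\cong X$. Choose a proper smooth holomorphic map $\pi\cln\cX\to\Delta$ with all fibres K\"ahler and points $t_1,t_2\in\Delta$ such that $\cX_{t_1}\cong X_{i-1}\cong X$ and $\cX_{t_2}\cong X_i$.
\begin{enumerate}
\item Via the isomorphism $\cX_{t_1}\cong X$, the restriction of $\pi$ to a small disk around $t_1$ is a deformation of $X$. Local rigidity of $X$ therefore gives a neighbourhood $U\ni t_1$ with $\cX_t\cong X$ for all $t\in U$.
\item Shrink $U$ to a disk $V_{t_1}$. Then $\pi^{-1}(V_{t_1})\to V_{t_1}$ is a proper smooth holomorphic family all of whose fibres are biholomorphic to $X$. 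By the Fischer--Grauert theorem such a family is holomorphically locally trivial, so after shrinking again $\pi^{-1}(V_{t_1})\cong \cX_{t_1}\times V_{t_1}$.
\item This is exactly the hypothesis of Theorem~\ref{res1}. Moreover $\cX_{t_1}\cong X$ is non-uniruled, since uniruledness is invariant under biholomorphism.
\item Theorem~\ref{res1} then gives $\cX_t\cong\cX_{t_1}$ for every $t\in\Delta$. In particular $X_i\cong\cX_{t_2}\cong X$, which completes the induction.
\end{enumerate}

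The only step needing care is the passage from "all nearby fibres are isomorphic to $X$" (the definition of local rigidity) to "the family is locally trivial" (the hypothesis of Theorem~\ref{res1}). This is precisely the content of the Fischer--Grauert theorem, so I expect no genuine obstacle. One should also note that the non-uniruled hypothesis is needed only for the fibre $X$ itself, which is fixed throughout the induction, so it never has to be re-verified on other fibres.
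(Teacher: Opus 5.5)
Your proposal is correct and is the argument the paper intends. The paper states this corollary as an immediate consequence of Theorem~\ref{res1}, applied link by link along the chain of K\"ahler direct deformations; local rigidity of $X$ together with the Fischer--Grauert theorem (which the paper invokes in the same way) supplies the local-triviality hypothesis at the fibre isomorphic to $X$, and your explicit induction just spells out what the paper leaves implicit.
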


Since deformations of K\"ahler surfaces remain K\"ahler, so the deformation equivalence in K\"ahler coincides with  the ordinary deformation equivalence.  Thus we have
\begin{coro}
Suppose that $X$ is a compact non-uniruled K\"ahler surface. Then $X$ is globally rigid if it is locally rigid.
\end{coro}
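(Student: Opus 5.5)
The plan is to reduce the statement to the preceding Corollary (equivalently, to Theorem~\ref{res1}), by showing that for surfaces the restriction ``in K\"ahler'' costs nothing.

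First I would show that every compact complex surface deformation equivalent to $X$ is K\"ahler. Take any proper smooth holomorphic family $\pi\colon\cX\to\Delta$ with one fiber K\"ahler. The first Betti number is a topological invariant, so all fibers have $b_1$ even, since one fiber does. I would then invoke the theorem of Buchdahl and Lamari (earlier Siu, via the Kodaira classification): a compact complex surface is K\"ahler if and only if $b_1$ is even. This shows that every fiber of $\pi$ is K\"ahler. Along a chain $X=X_0,X_1,\dots,X_k=X'$ of direct deformations, induction on $i$ then shows that each step is a direct deformation in K\"ahler. Hence, for surfaces, deformation equivalence coincides with deformation equivalence in K\"ahler.

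Next I would apply the previous Corollary, but to keep the plan self-contained I would run the induction along the chain directly through Theorem~\ref{res1}. Suppose $X_{i-1}\cong X$ and $\pi\colon\cX\to\Delta$ realizes the direct deformation between $X_{i-1}$ and $X_i$, with $\cX_{t_1}\cong X$. Local rigidity of $X$ gives a neighbourhood $V_{t_1}$ on which all fibers are isomorphic to $X$. By the Fischer--Grauert theorem, the family is then holomorphically locally trivial near $t_1$, i.e. $\pi^{-1}(V_{t_1})\cong X\times V_{t_1}$ after shrinking. Since $X$ is a non-uniruled K\"ahler surface, Theorem~\ref{res1} yields $\cX_t\cong X$ for all $t\in\Delta$, so $X_i\cong X$. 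Induction gives $X'\cong X$. The same conclusion holds if deformation equivalence is phrased via a family over a connected base $B$: I would join $b$ and $b'$ by a chain of embedded holomorphic disks and apply the same argument on each disk.

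The main obstacle is the first step, namely that K\"ahlerness is preserved under arbitrary (not merely small) deformations of surfaces. This is where the special dimension is used and where I would simply cite Buchdahl--Lamari/Siu rather than reprove it. The other subtle point is passing from ``all nearby fibers are isomorphic'' to the local triviality hypothesis of Theorem~\ref{res1}; Fischer--Grauert handles this directly, so I do not expect difficulty there.
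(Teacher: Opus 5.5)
Your proposal is correct and follows the paper's route. The paper likewise observes that deformations of K\"ahler surfaces remain K\"ahler (your $b_1$-parity argument via Buchdahl--Lamari/Siu), so deformation equivalence in K\"ahler coincides with ordinary deformation equivalence, and then invokes the preceding corollary, i.e.\ Theorem~\ref{res1}; your induction along the chain, with local rigidity plus Fischer--Grauert supplying the local triviality hypothesis, just unpacks that corollary explicitly.
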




Theorem \ref{res1} is also related to the following so called global non-deformability problem

\begin{quest}\label{quest}
Let $\pi\cln \cX\to \Delta=\{t\in\CC||t|<1\}$ be a smooth family of compact complex manifolds. Suppose that all the fibers $\cX_t$ with $t\in\Delta^*=\Delta\setminus  \{0\}$  are biholomorphic to a given compact complex manifold $S$. When  is the central fiber $\cX_0$  also biholomorphic to $S$, that is, $\cX_0\cong S$?
\end{quest}

 For projective manifolds with negative Kodaira dimension, there are many results on  global non-deformability. First of all, it is well-known that Hirzebruch surfaces are not global non-deformability.
  For the affirmative results, Siu \cite{Siu0}, Hwang \cite{Hw}, Hwang and  Mok \cite{HN1,HN2,HN3} made important contributions. For example, Siu \cite{Siu0} proved the global non-deformability for complex projective space, Hwang \cite{Hw} proved the global non-deformability for the complex hyperquadric,  Hwang and  Mok \cite{HN1,HN2,HN3} obtained the global non-deformability  for rational homogeneous spaces $S$  with second Betti number $b_2(S)=1$, under the assumption that the morphism $\pi$ is a K\"ahler morphism. The author \cite{L1},  The author-Liu \cite[Theorem 1.2]{LL} and  The author-Rao-Wang \cite{LRW} provided some results on global non-deformability under K\"ahler morphism.


\begin{coro}\label{res2} Let $\cX$ be a complex manifold, and let $\pi\cln \cX\to \Delta=\{t\in\CC||t|<1\}$ be a proper smooth holomorphic map such that all the fibers are K\"ahler manifolds. Suppose that all the fibers $\cX_t$ are biholomorphic to a given compact non-uniruled K\"ahler manifold $S$ for $t\in\Delta^*=\Delta\setminus  \{0\}$. Then the central fiber $\cX_0$ is also biholomorphic to $S$.
\end{coro}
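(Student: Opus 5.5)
Corollary \ref{res2} follows from Theorem \ref{res1} once we show the family is locally trivial at some point $t_1\in\Delta^*$. Then Theorem \ref{res1} gives $\cX_0\cong\cX_{t_1}\cong S$, since $\cX_{t_1}\cong S$ is a non-uniruled compact K\"ahler manifold by hypothesis. So the plan is to show that a family all of whose fibres over the punctured disk are isomorphic to $S$ is holomorphically locally trivial near a general point of $\Delta^*$.

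The natural tool is the Fischer--Grauert theorem: a proper smooth holomorphic family whose fibres are all biholomorphic to one another is locally trivial. Restrict $\pi$ to a small disk $V_{t_1}\subset\Delta^*$ around any $t_1\neq 0$. Every fibre over $V_{t_1}$ is biholomorphic to $S$, so Fischer--Grauert yields $\pi^{-1}(V_{t_1})\cong S\times V_{t_1}$. This is exactly the local triviality hypothesis of Theorem \ref{res1} at $t_1$, with no K\"ahler assumption needed for this step.

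All hypotheses of Theorem \ref{res1} now hold: $\pi$ is proper, smooth and holomorphic; all fibres, including $\cX_0$, are K\"ahler by assumption; the family is locally trivial at $t_1$; and $\cX_{t_1}\cong S$ is non-uniruled. Hence $\cX_t\cong\cX_{t_1}$ for every $t\in\Delta$, and in particular $\cX_0\cong S$.

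The only step that is not bookkeeping is the appeal to Fischer--Grauert, so that is where I expect the main obstacle, if any. If one prefers to avoid citing it, the alternative is Kuranishi theory. The Kodaira--Spencer map of a family with isomorphic fibres vanishes at a general point, but this only yields triviality directly when the Kuranishi space is reduced, so Fischer--Grauert is the cleaner choice. All the substantive content, namely passing isomorphism across the special fibre $t=0$ for non-uniruled K\"ahler fibres, is already contained in Theorem \ref{res1}.
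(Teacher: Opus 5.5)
Your proposal is correct and matches how the paper intends this corollary to follow. For any $t_1\neq 0$ all fibres near $t_1$ are isomorphic to $S$, so Fischer--Grauert (invoked the same way throughout the paper) gives local triviality at $t_1$, and Theorem~\ref{res1} yields $\cX_0\cong\cX_{t_1}\cong S$. The version of the main theorem proved in Section 3 in fact assumes only $\cX_t\cong\cX_{t_1}$ near $t_1$, so the corollary's hypotheses apply even without the Fischer--Grauert step. You could also apply Theorem~\ref{main-step} directly with the open dense set $W=\Delta^*$, which skips the Barlet-space and Demailly--P\u{a}un steps of the main proof.
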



The main idea of the proof of our main result, Theorem \ref{res1},  is to use the fact that the coase moduli space $M$ of polarized non-uniruled K\"ahler manifolds is Hausdorff. Our major task is  the following two parts: one is to construct a class $\alpha\in H^2(\cX,\RR)$ such that its restriction satisfies $\alpha_t\in H^{1,1}(\cX_t,\CC)$ for all $t\in\Delta$ and $\alpha_{t_1}$ is a K\"ahler class. The other is to use Demailly-P$\breve{\textrm{a}}$un \cite{DP04} to show that  there exists a connected open dense subset $W\subset \Delta$ contains  $U_{t_1}$ such that the restriction $\alpha_t\in H^{1,1}(\cX_t,\CC)$ are K\"ahler classes for for all $t\in W$. Thus the pair $(\pi_{W}:\cX_W=\pi^{-1}(W)\to W ,\alpha)$ forms a family of polarized K\"ahler manifolds.

{\bf Acknowledgement:}  The authors would like to thank Xiao-Lei Liu and Sheng Rao  for their interest and useful discussions.

\section{Preliminaries:Moduli of polarized K\"ahler manifolds, Barlet Spaces}

\subsection{Moduli of polarized K\"ahler manifolds}
\begin{defi}A polarized compact K\"ahler manifold is a pair $(X, \omega_{X})$ consisting of a connected compact complex manifold $X$ and a K\"ahler class $\omega_{X}\in H^2(X,\RR)$(i.e. a class represented by a K\"ahler form).
\end{defi}

An isomorphism of two polarized compact K\"ahler manifolds $(X, \omega_{X})$ and $(\widetilde{X},\omega_{\widetilde{X}})$ is a holomorphic isomorphism $\phi:X\to \widetilde{X}$ with $\phi^*(\omega_{\widetilde{X}})=\omega_{X}$.

\begin{defi}A polarized family of compact K\"ahler manifolds (parametrized by a complex space $B$) is a pair $(f,\omega_{\cX/B})$ consisting of a proper smooth holomorphic map $f:\cX\to B$ of complex spaces with connected fibers and an element
 $\omega_{\cX/B}\in \Gamma(B,R^2f_*\RR)$ such that
 \begin{enumerate}
 \item The value of the section $\omega_{\cX_b}=(\omega_{\cX/B})(b)$ on $b$ is a K\"ahler class on each fiber $\cX_b =f^{-1}(b)$.
  \item $\eta(\omega_{\cX/B})=0$ where $\eta:\Gamma(B,R^2f_*\RR)\to\Gamma(B,R^2f_*\mathcal{O}_{\cX})$ is the homomorphism induced by the natural inclusion $\RR\to \mathcal{O}_{\cX}$ of sheaves on $\mathcal{X}$.
 \end{enumerate}
\end{defi}
When $B$ is a smooth complex manifold, the above definition is related to the following definition of K\"ahler morphism.

\begin{defi}\label{defi-1} A smooth proper holomorphic map $f:\cX\to B$  between complex manifolds is called a  {\emph {K\"ahler morphism}}, if there exists an open covering $\{U_{i}\}$ of $\cX$ and smooth functions $\varphi_i$ on $U_i$ such that $\varphi_i-\varphi_j$ is the real part of some holomorphic function on $U_i\cap U_j$, and the forms $\sqrt{-1}\partial\dbar\varphi_i$ are positive definite on $T_{\cX/B}|_{U_i}$.


We say that the morphism $f$ is {\emph { K\"ahler at a point}} $s\in B$, if $f$ becomes a  K\"ahler morphism after shrinking $B$ to some neighborhood of $s$.
\end{defi}

For the K\"ahler morphism $f$ in the above definiton, it is easy to see that the differential form $\omega=\{\omega_i\}$ where $\omega_i=\omega|_{U_i}=\sqrt{-1}\partial\dbar\varphi_i$  is a relative K\"ahler metric, i.e., $\omega|_{\cX_t}$ is a K\"ahler metric on $\cX_t$ for each $t\in B$. In particular,   the form $\omega$ is a K\"ahler metric on $\cX$ if $B$ is a point.

Let $\mathcal{M}$ be the groupoid fibered category over the category of reduced complex spaces, such that for a reduced complex space $B$ the groupoid $\mathcal{M}(B)$ is as follows
 \begin{align*}
 \mathcal{M}(B)=&\bigg\{(f: \cX\to B,\omega_{\cX/B})\, \mbox{is a family of polarized K\"ahler manifolds };\\
&(\cX_b,\omega_{\cX_b})\, \mbox{is a polarized non-uniruled K\"ahler manifold,\,  $\forall b\in B$}\bigg\}.
\end{align*}

 The arrows of $\mathcal{M}(B)$ from $(f: \cX\to B,\omega_{\cX/B})$ to $(f': \cX'\to B,\omega_{\cX'/B})$ are isomorphisms $\Phi:  \cX\to \cX'$ such that $f=f'\circ\Phi $ and $\Phi^*(\omega_{\cX'/B})=\omega_{\cX/B}$. By \cite[Section 1, Theorem]{Fu84} and \cite[(1.3)Theorem]{Sch84}, there exists a coarse separated moduli complex space $M$ for $\mathcal{M}$.

\subsection{Relative cycle spaces}

To construct K\"ahler classes on the  fibers, we need to introduce some notation of the Barlet cycle spaces of compact complex manifolds.

 Let $\pi:\cX\to \Delta$ be a smooth family of $n$-dimensional K\"ahler manifolds.  For $0\le k\le n$, let $C^k(\cX/\Delta)$ be the relative Barlet space, which parametrizes the $k$-dimensional cycles in $\cX$ over $\Delta$. Then there exists a canonical morphism $$\mu:C(\cX/\Delta):=\bigcup_{0\le k\le n}C^k(\cX/\Delta)\to \Delta,$$
 and we have the following properness  property of the morphism $\mu$.

\begin{prop}\cite[Proposition 3.9]{LL24}\label{prop-1}
Let $\pi:\cX\to \Delta$ be a smooth proper morphism whose fibers $\cX_t$ are K\"ahler for all $t\in \Delta$, and $A$ be a connected component of $C(\cX/\Delta)$.  Then the restricted morphism $\mu|_A:A\to \Delta$ is proper.
\end{prop}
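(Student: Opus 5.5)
The plan is to reduce properness of $\mu|_A$ to a uniform volume bound, and then apply Bishop's compactness theorem in the form used for Barlet spaces (Barlet, Fujiki, Lieberman). That theorem says: a family of cycles contained in a fixed compact subset of $\cX$, whose volumes with respect to a fixed hermitian metric are uniformly bounded, is relatively compact in $C(\cX/\Delta)$. So let $K\subset\Delta$ be compact. Since $A$ is a connected component of $C(\cX/\Delta)$, it is open and closed, so it suffices to show $\mu^{-1}(K)\cap A$ is relatively compact. Cycles lying over $K$ are contained in the compact set $\pi^{-1}(K)$, so what remains is a volume bound.

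\textbf{Step 1 (topological invariance).} By Ehresmann's theorem, $\cX\cong \cX_0\times\Delta$ as $C^\infty$ manifolds, and $H_*(\cX,\ZZ)\cong H_*(\cX_0,\ZZ)$. The fundamental class map $Z\mapsto [Z]\in H_{2k}(\cX,\ZZ)$ is continuous, hence locally constant, on the Barlet space. This uses that Barlet families are locally given by continuous families of currents of integration, and that integral homology classes vary discretely. Therefore $[Z]=\gamma$ is constant for all $Z\in A$, where $A\subset C^k(\cX/\Delta)$ for a fixed $k$.

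\textbf{Step 2 (local uniform bound).} Fix $t_0\in K$. By the Kodaira--Spencer stability theorem, the K\"ahler condition is open, so after shrinking to a small closed disk $\bar D\ni t_0$ we get a $C^\infty$ family of K\"ahler forms $\omega_t$ on $\cX_t$, $t\in\bar D$. Fix any hermitian metric $g$ on $\cX$. By compactness of $\pi^{-1}(\bar D)$ there is a constant $C$ with $g|_{\cX_t}\le C\,\omega_t$ for all $t\in\bar D$. Let $Z\in A$ with $\mu(Z)=t\in\bar D$. Then $Z\subset\cX_t$, and
\[
\mathrm{vol}_g(Z)\le C^k\int_Z\omega_t^k=C^k\,\langle \gamma,[\omega_t]^k\rangle .
\]
Here $[\omega_t]\in H^2(\cX_0,\RR)$ via the trivialization, and it depends continuously on $t$, so the right-hand side is bounded on $\bar D$. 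Covering $K$ by finitely many such disks gives a uniform volume bound on $\mu^{-1}(K)\cap A$. Bishop's theorem then yields relative compactness, and hence compactness, since $A$ is closed.

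\textbf{Main obstacle.} The delicate point is Step 1: we need the homology class to be locally constant along a connected component of the relative Barlet space, including near singular points of $A$ and across changes of the base point $\mu(Z)$. I would argue this via continuity of integration currents on Barlet families, so that $\int_Z\eta$ is continuous in $Z$ for every closed form $\eta$ on $\cX$, combined with integrality of $[Z]$. A secondary check is that the smooth dependence of $\omega_t$ on $t$ in Kodaira--Spencer stability gives continuity of $t\mapsto[\omega_t]$, which is what the uniform comparison constant $C$ requires.
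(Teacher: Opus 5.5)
Your argument is correct. The paper gives no proof of Proposition~\ref{prop-1}; it is quoted from \cite[Proposition 3.9]{LL24}, so there is no in-paper argument to compare with. What you wrote is a sound self-contained proof along the classical Bishop--Barlet--Fujiki lines. You bound volumes uniformly on $\mu^{-1}(K)\cap A$. You then apply the compactness criterion to cycles supported in the fixed compact set $\pi^{-1}(K)$. This uses that $C(\cX/\Delta)$ is closed in $C(\cX)$ and that $A$ is open and closed.

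It is worth seeing why your Step 1 is needed. Suppose $\pi$ were a K\"ahler morphism in the sense of Definition~\ref{defi-1}. Then the forms $\sqrt{-1}\partial\dbar\varphi_i$ glue to a global $d$-closed form $\omega$ on $\cX$, positive on $T_{\cX/\Delta}$. In that case $\int_Z\omega^k$ is locally constant on $C(\cX/\Delta)$ directly, and the volume bound is one line.

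A smooth family with K\"ahler fibres need not be a K\"ahler morphism, even over arbitrarily small discs. An example is a twistor family of K3 surfaces restricted to a disc. A class that stays of type $(1,1)$ on every fibre must be orthogonal to the positive $3$-plane, so it has negative square and cannot be K\"ahler. In general one only has fibrewise K\"ahler forms $\omega_t$ that are not restrictions of a single closed form. Your device handles exactly this case: constancy of $\gamma=[Z]$ along $A$, together with continuity of $t\mapsto[\omega_t]$ under the Ehresmann trivialization.

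The points you flagged do go through.
\begin{itemize}
\item For a closed form $\eta$ on $\cX$, multiply by a cutoff equal to $1$ near the supports, which stay in a compact set locally on the cycle space. Barlet's continuity of $Z\mapsto\int_Z\eta$ then applies.
\item The image of $H_{2k}(\cX,\ZZ)\cong H_{2k}(\cX_{t_0},\ZZ)$ in $H_{2k}(\cX,\RR)$ is a full lattice, hence discrete.
\item Only the real class $\gamma$ is needed. The identity $\int_Z\omega_t^k=\langle\gamma,[\omega_t]^k\rangle$ follows from the homotopy equivalence $\cX_t\hookrightarrow\cX$.
\end{itemize}
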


Using this properness of connected components of the Barlet cycle space, we have the following Lemma which will be used in the proof of the main theorem.
\begin{lemm}\label{Kahler}
Let $\pi:\cX\to \Delta$ be a smooth proper morphism whose fibers $\cX_t$ are K\"ahler for all $t\in \Delta$. Suppose that there exists a small neighborhood $U_{t_1}$ of $t_1\in\Delta$ such that $\cX_t\cong \cX_{t_1}$ for $t\in U_{t_1}$. Let $A$ be a connected component of $C(\cX/\Delta)$ satisfying $A\times_{\Delta} U_{t_1}\neq \emptyset$. Then the image $\mu(A)=\Delta$.
\end{lemm}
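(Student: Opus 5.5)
The plan is to show that $\mu(A)$ is both closed and open in $\Delta$ and contains a point; connectedness of $\Delta$ then gives $\mu(A)=\Delta$. By hypothesis $A\times_\Delta U_{t_1}\neq\emptyset$, so $\mu(A)$ meets $U_{t_1}$.

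\emph{Closedness.} By Proposition \ref{prop-1}, $\mu|_A\colon A\to\Delta$ is proper. A proper holomorphic map between (locally compact, Hausdorff) complex spaces is closed, so $\mu(A)$ is closed in $\Delta$. Moreover, by Remmert's proper mapping theorem $\mu(A)$ is an irreducible analytic subset of $\Delta$, since $A$ is connected. I may need to pass to an irreducible component of $A$ meeting $\mu^{-1}(U_{t_1})$, and handle connectedness through chains of irreducible components. The upshot is that $\mu(A)$ is a finite union of analytic subsets of $\Delta$, each either a discrete set of points or all of $\Delta$.

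\emph{Openness, or excluding the point case.} It suffices to show that $\mu(A)$ contains a nonempty open subset of $U_{t_1}$. The natural input is local triviality. Shrink $U_{t_1}$ so that $\pi^{-1}(U_{t_1})\cong \cX_{t_1}\times U_{t_1}$ over $U_{t_1}$; this is the situation in Theorem \ref{res1}, and in general one can use the isomorphisms $\cX_t\cong\cX_{t_1}$ together with a local trivialization by Fischer--Grauert. Take a cycle $Z\in A$ with $\mu(Z)=s\in U_{t_1}$. Under the trivialization $Z$ corresponds to a cycle $Z_0$ on $\cX_{t_1}$, and the family $\{Z_0\times\{t\}\}_{t\in U_{t_1}}$ is an analytic family of cycles in $\cX$ over $U_{t_1}$. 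It is parametrized by the connected set $U_{t_1}$ and passes through $Z$, so it lies in the connected component $A$. Hence $U_{t_1}\subset\mu(A)$. This set is open and nonempty, so each irreducible component of $\mu(A)$ containing it must be all of $\Delta$. Alternatively, $\mu(A)$ is closed and contains the open set $U_{t_1}$; combined with analyticity, this forces $\mu(A)=\Delta$.

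\emph{Main obstacle.} The hard step is the openness argument: producing the product family of cycles. If the hypothesis only gives abstract fiberwise isomorphisms $\cX_t\cong\cX_{t_1}$ rather than a holomorphic trivialization, I would first invoke the Fischer--Grauert theorem. It says that a proper smooth family with all fibers isomorphic is locally holomorphically trivial, so a trivialization exists over a possibly smaller neighborhood of $t_1$. I would then check that the translated family of cycles is a genuine holomorphic map into the Barlet space, which is standard for product families. The remaining steps (closedness via properness, and the analytic-image argument) are routine.
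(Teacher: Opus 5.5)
Your argument is essentially the paper's: Fischer--Grauert gives local holomorphic triviality, the translated product family of cycles shows that $\mu(A)$ contains a nonempty open set, and properness plus Remmert make $\mu(A)$ a closed analytic subset of $\Delta$, hence all of $\Delta$ (the paper phrases the middle step as $C(\pi^{-1}(V)/V)\cong C(\cX_{t_1})\times V$). One small correction: do not shrink to a trivializing neighborhood of $t_1$, since $\mu(A)$ need not meet it; instead trivialize around $s=\mu(Z)$, which Fischer--Grauert allows because all fibers over $U_{t_1}$ are isomorphic (this is the paper's choice of $\tilde t$ and $V_{\tilde t}$), and note that your claim that $\mu(A)$ is irreducible ``since $A$ is connected'' is unjustified but also unnecessary.
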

\begin{proof}
  Let $\pi_{U_{t_1}}: \cX_{U_{t_1}}:=\pi^{-1}(U_{t_1})\to U_{t_1}$ be the restriction of $\pi$ on $\cX_{U_{t_1}}$. Since the fibers $\cX_t\cong \cX_{t_1}$ for all $t\in U_{t_1}$, the smooth family $\pi_{U_{t_1}}$ is locally trivial by \cite{FG}. Thus, for each $t\in U_{t_1}$, there exists a small neighborhood $V_t\subset U_{t_1}$ of $t$ such that
$$\pi^{-1}(V_t)\cong V_t\times \cX_{t_1}.$$
  By the functorial property of the relative Barlet space, we have
$$
 C(\pi^{-1}(V_t)/V_t)\cong C(\cX_{t_1})\times V_t.
$$
 Because $A\times_{\Delta} U_{t_1}\neq \emptyset$, there exists a point $\widetilde{t}\in U_{t_1}$ such that
 \beq\nonumber
 V_{\widetilde{t}} \times_{\Delta}A \subset C(\pi^{-1} (V_{\widetilde{t}})/V_{\widetilde{t}})\cong C(\cX_{\tilde{t}})\times V_{\tilde{t}}
 \eeq
 is a connected component. Thus the projection $V_{\widetilde{t}} \times_{\Delta}A\to V_{\tilde{t}}$ is surjective. Note that $\mu|_{A}$ is a proper morphism, so $\mu(A)\subseteq \Delta$ is an analytic subset containing the open subset $V_{\tilde{t}}$. Hence $\mu(A)$ must be $\Delta$.
\end{proof}

We also need the following celebrated theorem proved by Demailly-P\u{a}un \cite{DP04} and Collins-Tosatti \cite{CT15}.
\begin{theo}[{\cite[Theorem 0.9]{DP04},\cite[Corollary 1.3]{CT15}}]\label{Kahler-1}
If $X$ is a compact K\"ahler manifold, then the K\"ahler cone $\mathcal{K}(X)$ of $X$ is one of the connected components of the set $\mathcal{P}$ of real $(1, 1)$-cohomology classes $\alpha=\{\omega\}$ which are numerically positive on analytic cycles, i.e. such that $\int_Z\omega^p>0$ for every irreducible analytic set $Z$ in $X$ with $p=\dim Z$.
\end{theo}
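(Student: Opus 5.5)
The proof will show that $\mathcal{K}(X)$ is open and relatively closed in $\mathcal{P}$; since $\mathcal{K}(X)$ is convex, hence connected, it is then a connected component of $\mathcal{P}$.

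\emph{Openness and the inclusion $\mathcal{K}(X)\subset\mathcal{P}$.} If $\alpha=\{\omega\}$ with $\omega$ a K\"ahler form, then $\omega|_{Z_{\rm reg}}$ is a K\"ahler form on the regular part of any irreducible analytic set $Z$ of dimension $p$. Hence $\int_Z\omega^p>0$, and so $\mathcal{K}(X)\subset\mathcal{P}$. The K\"ahler cone is open in $H^{1,1}(X,\RR)$: a small closed perturbation of a K\"ahler form stays positive by compactness of $X$. Therefore $\mathcal{K}(X)$ is open in $\mathcal{P}$.

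\emph{Relative closedness.} Let $\alpha\in\overline{\mathcal{K}(X)}\cap\mathcal{P}$. Then $\alpha$ is nef, and we must show that $\alpha$ is K\"ahler. We argue by induction on $n=\dim X$; the case $n=1$ is trivial, since $\int_X\alpha>0$ forces $\alpha$ to be K\"ahler.

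\emph{Step 1: a K\"ahler current in $\alpha$.} Since $\alpha$ is nef and $\int_X\alpha^n>0$, the class $\alpha$ contains a K\"ahler current $T\ge\varepsilon\omega$. To prove this, take a K\"ahler form $\omega$ on $X$. For each $\delta>0$, use Yau's theorem to solve the Monge--Amp\`ere equations $(\alpha_\delta+i\partial\dbar\varphi_\delta)^n=C_\delta\,\omega^n$, where $\alpha_\delta=\alpha+\delta\omega$ and $C_\delta$ is fixed by the volume. These solutions are used to show that the $(n-1,n-1)$-class $\alpha^{n-1}$ dominates a positive multiple of the cohomology class of the diagonal in $X\times X$ (the mass concentration argument). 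Passing to a weak limit as $\delta\to0$ produces the K\"ahler current. By Demailly's regularization theorem, we may take $T$ with analytic singularities along an analytic set $E$.

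\emph{Step 2: induction along $E$.} Each irreducible component $Z$ of $E$ satisfies the numerical positivity hypothesis, and so does every analytic subset of $Z$. Pass to a desingularization $\widetilde Z\to Z$, perturbing by small multiples of exceptional classes if needed. The induction hypothesis then makes $\alpha|_Z$ K\"ahler on $Z$, in the sense that it is represented by a smooth form that is the restriction of a strictly psh local potential near $Z$.

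\emph{Step 3: gluing (main obstacle).} Following P\u{a}un's argument, combine the K\"ahler potential on a neighbourhood of $E$ with the potential of $T$, which is smooth and strictly positive off $E$. The tool is a regularized maximum $\max_\varepsilon(\varphi_T,\ \psi_E - C)$, arranged so that it equals $\psi_E$ near $E$, where $\varphi_T\to-\infty$, and equals $\varphi_T$ away from $E$. The delicate point is extending the K\"ahler form from the possibly singular and non-reduced set $E$ to a neighbourhood while keeping the positivity uniform. This requires an extension result for strictly psh functions from subvarieties (Coltoiu-type), applied inductively over the stratification of $E$.

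\emph{Alternative route via Collins--Tosatti.} The non-K\"ahler locus $E_{nK}(\alpha)$ of a nef and big class equals its null locus, namely the union of irreducible $Z$ with $\int_Z\alpha^{\dim Z}=0$. Numerical positivity makes this null locus empty, so $\alpha$ is K\"ahler, and Step 3 is bypassed.
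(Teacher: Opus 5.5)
The paper gives no proof of this statement. It is quoted from Demailly--P\u{a}un and Collins--Tosatti, so your proposal has to be measured against those sources. Your outer reduction is the standard one, used by Demailly--P\u{a}un: $\mathcal{K}(X)$ is nonempty, convex, open and contained in $\mathcal{P}$, so it suffices to show that a nef class in $\mathcal{P}$ is K\"ahler. Your ``alternative route'' is how the statement follows from Collins--Tosatti: $\alpha$ is nef, $\int_X\alpha^n>0$, and numerical positivity empties the null locus, so $E_{nK}(\alpha)=\emptyset$ and $\alpha$ is K\"ahler. As a proof by citation, on the same footing as the paper's, that route is complete. It outsources your Steps 1--3 rather than bypassing them, since the proof of $E_{nK}=\mathrm{Null}$ itself relies on Demailly--P\u{a}un mass concentration and on gluing arguments like your Step 3.

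The self-contained Demailly--P\u{a}un route does not work as written. In Step 1, Monge--Amp\`ere equations with the fixed right-hand side $C_\delta\,\omega^n$ concentrate no mass anywhere. A weak limit of $\alpha_\delta+i\partial\overline{\partial}\varphi_\delta$ is then just some closed positive current in $\alpha$, and nothing forces it to dominate $\varepsilon\omega$. Also, ``the $(n-1,n-1)$-class $\alpha^{n-1}$ dominates the diagonal class in $X\times X$'' is a type mismatch.

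What actually works is the following. On $X\times X$ take the nef class $\hat\alpha=pr_1^*\alpha+pr_2^*\alpha$, whose top power is $\binom{2n}{n}(\int_X\alpha^n)^2>0$. Solve Monge--Amp\`ere equations for $\hat\alpha+\delta\hat\omega$ whose right-hand sides are volume forms concentrating their mass in shrinking neighbourhoods of the diagonal $D$. An eigenvalue estimate then yields a closed positive $(n,n)$-current $\Theta\in\{\hat\alpha\}^n$ with $\Theta\ge c[D]$ for some $c>0$. The K\"ahler current is $T=(pr_1)_*(\Theta\wedge pr_2^*\omega)$. It lies in the class $n\big(\int_X\alpha^{n-1}\wedge\omega\big)\,\alpha$ and satisfies $T\ge c\,\omega$. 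This pushforward step, which turns domination of $[D]$ into $T\ge c\,\omega$, is missing from your sketch.

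Step 2 also has a gap. Take a resolution $\mu:\widetilde Z\to Z$.
\begin{itemize}
\item The manifold induction hypothesis does not apply to $\mu^*\alpha$, because $\mu^*\alpha$ is numerically zero on $\mu$-contracted subvarieties.
\item Perturbing to $\mu^*\alpha-\varepsilon\{F\}$, with $F$ exceptional, gives a class not known to be nef, so the hypothesis still does not apply.
\item Even a K\"ahler class on $\widetilde Z$ only pushes forward to a K\"ahler current on $Z$, singular along $\mu(\mathrm{Exc})$. It does not give a smooth, locally strictly psh representative of $\alpha|_Z$.
\end{itemize}
The fix is to run the whole induction over compact K\"ahler spaces. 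For singular $Z$, produce a K\"ahler current in $\alpha|_Z$ by resolving (where $\mu^*\alpha$ is nef and big) and pushing forward. Then apply the induction hypothesis to its singular locus and use P\u{a}un's gluing on $Z$ itself.
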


\section{Rigidity}
First we recall several basic constructions for variations of Hodge structures from \cite[Chapter 10]{Vo}, \cite[Section II.1]{G68} and \cite[Chapter 4]{CMP}.
 When $\pi\cln \cX\to \Delta$ is a smooth family of compact complex manifolds, by \cite[Theorem 9.3]{Vo} $\pi:\cX\to\Delta$ forms a smooth fiber bundle. There exists a diffeomorphism
\beq
T:\cX\cong \cX_{t_1}\times \Delta
\eeq
such that $pr_2\circ T=\pi$, where $pr_2:\cX_{t_1}\times \Delta\to \Delta$ is the projection, and $T|_{\pi^{-1}(t_1)}=Id:\cX_{t_1}\to \cX_{t_1}$.

 The explanation of the existence of $T$ is from \cite{Math-1}: For the constant map $F_0:\Delta\to \Delta$ which maps the disk to $t_1$, Let $F:\Delta\times [0,1]\to \Delta$ be a smooth homotopy from  $F_0:\Delta\to \Delta$ to the identity map $F_1:\Delta\to \Delta$. The pullback $F^*\cX$ is a smooth fiber bundle over $\Delta\times [0,1]$. By \cite[Theorem 5]{del}, this fiber bundle admits a complete Ehresmann connection.  Let  $v$ be the trivial vector field of $[0,1]$, it induces a vector field $\widetilde{v}$ on $\Delta\times [0,1]$. Denote by $\mathcal{V}$ the horizontal vector field over $F^*\cX$ induced by the pullback of $\widetilde{v}$. Then the flow induced by $\mathcal{V}$ yields an isomorphism between $F_0^*\cX$ and $F^*_1\cX$, which gives the isomorphism $T^{-1}: \cX_{t_1}\times \Delta \to \cX$.

   Let  $T_{t_1}:=pr_1\circ T: \cX\to \cX_{t_1}$ and $\kappa_t: \cX_t\to \cX_{t_1}$ be the composition of $\imath_t: \cX_t\to \cX$ with $T_{t_1}$. The diffeomorphisms $\{\kappa_t\}$
 depend smoothly on $t$. Let $\psi_t:\cX_{t_1}\to \cX_t$ be the inverse diffeomorphism of $\kappa_t$. See the following commutative diagram.
\begin{eqnarray}\label{diagram-1}
\xymatrix{
\cX_t \ar[r]^{\imath_t} \ar@<.5ex>[rd]^{\kappa_t}  &  \cX\ar[d]^{T_{t_1}}\ar[r]^(0.35){T}\ar[rd]^(0.35){\pi}|\hole  & \cX_{t_1}\times\Delta \ar[dl]^(0.6){p_1}\ar[d]^{p_2}\\
& \cX_{t_1}\ar@<.5ex>[lu]^{\psi_t}  & \Delta
}
\end{eqnarray}

Because $\pi:\mathcal{X}\to \Delta$ is a smooth family of compact complex manifolds, we have that
\beq\label{iden-1}
\psi_t^*H^{k}(\cX_t,\ZZ)=H^{k}(\cX_{t_1},\ZZ),\, \psi_t^*H^{k}(\cX_t,\CC)=H^{k}(\cX_{t_1},\CC).
\eeq
Thus
\beq\label{iden-2}E^k:=\bigcup_{t\in\Delta}\psi_t^*H^{k}(\cX_t,\CC)
\eeq
forms a constant sheaf on $\Delta$, and $E^k=\Delta\times H^{k}(\cX_{t_1},\CC)\cong R^k\pi_*\CC$ over $\Delta$. Let $\Omega_{\Delta}$ be the cotangent sheaf of $\Delta$. Following \cite[Definition 9.13]{Vo}, there exists a flat connection
\beq
\nabla^{k,GM}: R^k\pi_*\CC\otimes \cO_{\Delta}\to R^k\pi_*\CC\otimes \Omega_{\Delta}
\eeq
such that the sections $\cup_{t\in\Delta}\kappa_t^*\alpha_{t_1}$ induced by the classes $\alpha_{t_1}\in H^{k}(\cX_{t_1},\CC)$ are flat sections of $\nabla^{k,GM}$.

Because
\beq\nonumber
\psi_t^*H^{2}(\cX_t,\CC)=H^{2}(\cX_{t_1},\CC),
\eeq
the filtration
\beq\nonumber
F^p_t:=\bigoplus_{r+s=2,r\ge p}\psi_t^*H^{r,s}(\cX_t)
\eeq
defines a Hodge filtration on the space $H^{2}(\cX_{t_1},\CC)$ for $t\in\Delta$, where $H^2(\cX_t,\CC)=\sum_{r+s=2} H^{r,s}(\cX_t)$. Since $\Delta$ is contractible, this induces  the following holomorphic morphism as in \cite[(1.1) Theorem]{G68} or \cite[Theorem 10.9]{Vo}
$$
\cP^{p,2}\cln  \Delta\to \mbox{Grass}(b^{p,2},H^2(\cX_{t_1},\CC)),
$$
where $\mbox{Grass}(b^{p,2},H^2(\cX_{t_1},\CC))$ is the  Grassmannian of subspaces in $H^2(\cX_{t_1},\CC)$ with dimension $b^{p,2}$.

\begin{lemm}\label{lemm4}Let $\pi\cln \cX\to \Delta$ be a proper smooth morphism from a complex manifold $\cX$ to the unit disc $\Delta$ such that all the fibers $\cX_t$ are K\"ahler for $t\in \Delta$. Suppose that there exists a small neighborhood $U_{t_1}$ of $t_1\in\Delta$ such that $\cX_t\cong \cX_{t_1}$ for $t\in U_{t_1}$.  Let $A\in H^{1,1}(\cX_{t_0},\CC)$ be a class on $\cX_{t_0}$ for some $t_0\in\Delta$. Then there exists a class $\alpha\in H^2(\cX,\RR)$ such that its restriction satisfies $\alpha_t\in H^{1,1}(\cX_t,\CC)$ for all $t\in\Delta$ and $\alpha_{t_0}=A$.
\end{lemm}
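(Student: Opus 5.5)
The plan is to show that the period maps $\cP^{p,2}$ are constant on all of $\Delta$, and then to transport $A$ by the flat (Gauss--Manin) identification. Under this identification every fiber will then have the same $(1,1)$-part.

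\textbf{Step 1: the period maps are constant on $U_{t_1}$.}
By \cite{FG}, as in the proof of Lemma \ref{Kahler}, the family over $U_{t_1}$ is locally trivial. So every $t\in U_{t_1}$ has a neighborhood $V_t$ with a holomorphic trivialization $\pi^{-1}(V_t)\cong \cX_{t_1}\times V_t$ over $V_t$. This trivialization gives diffeomorphisms $\cX_{t_1}\to\cX_s$ for $s\in V_t$ that are biholomorphisms, so they carry $H^{r,s}$ to $H^{r,s}$.

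Since $V_t$ is contractible, these diffeomorphisms are isotopic through fiber-preserving maps to the maps $\psi_s\circ(\psi_t^{-1}\circ\phi)$, where $\phi$ is the fixed identification $\cX_{t_1}\cong\cX_t$ given by the trivialization. Hence they induce on cohomology the same isomorphisms as the Gauss--Manin flat transport. It follows that $\cP^{p,2}$ is constant on $V_t$, for $p=1,2$ and each such $t$. As $U_{t_1}$ is covered by these $V_t$ and can be taken connected, $\cP^{p,2}$ is locally constant, hence constant, on $U_{t_1}$.

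\textbf{Step 2: identity theorem.}
The map $\cP^{p,2}\cln\Delta\to \mbox{Grass}(b^{p,2},H^2(\cX_{t_1},\CC))$ is holomorphic (\cite[Theorem 10.9]{Vo}). It is constant on a nonempty open subset of the connected disk, so it is constant on all of $\Delta$. Write it in a local affine chart of the Grassmannian; each coordinate function is then holomorphic and constant on an open set, hence constant.

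Thus $F^p_t=F^p_{t_1}$ for all $t$. Because $\psi_t^*$ commutes with complex conjugation (it is induced by a real diffeomorphism), we get
$$\psi_t^*H^{1,1}(\cX_t)=F^1_t\cap\overline{F^1_t}=H^{1,1}(\cX_{t_1})\quad\text{for all } t\in\Delta.$$

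\textbf{Step 3: construct $\alpha$.}
Set $a:=\psi_{t_0}^*A\in H^{1,1}(\cX_{t_1})$. Here $A$ is understood to be a real class, as needed for $\alpha\in H^2(\cX,\RR)$; then $a\in H^2(\cX_{t_1},\RR)$. Define
$$\alpha:=T_{t_1}^*a\in H^2(\cX,\RR),$$
where $T_{t_1}$ is a homotopy equivalence, so this is the flat extension of $a$. Its restriction to the fiber is $\alpha_t=\imath_t^*T_{t_1}^*a=\kappa_t^*a$. Hence
$$\psi_t^*\alpha_t=a\in H^{1,1}(\cX_{t_1})=\psi_t^*H^{1,1}(\cX_t),$$
so $\alpha_t\in H^{1,1}(\cX_t,\CC)$ for every $t$. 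Moreover $\alpha_{t_0}=\kappa_{t_0}^*\psi_{t_0}^*A=A$.

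\textbf{Main obstacle.}
The delicate point is Step 1: one must check that the cohomological identification induced by the holomorphic local trivializations coincides with the Gauss--Manin identification $\psi_t^*$. This rests on the triviality of monodromy over a contractible base and on the fact that homotopic maps induce the same map on cohomology. Once this is settled, Steps 2 and 3 are formal.
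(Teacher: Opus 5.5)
Your proof is correct and follows essentially the same route as the paper: local triviality over $U_{t_1}$ via Fischer--Grauert, constancy of $\cP^{p,2}$ near $t_1$, the identity theorem to make $\cP^{p,2}$ constant on all of $\Delta$, and then the flat extension $\alpha:=T_{t_1}^*\psi_{t_0}^*A$. The one difference is how you get local constancy: you compare the holomorphic trivialization with the flat transport by homotopy invariance, whereas the paper uses the vanishing of the Kodaira--Spencer map together with Griffiths' formula for $d\cP^{p,2}$; your remark that $A$ must be taken real is a needed correction to the statement.
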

\begin{proof}

Let $\tilde{\pi}=\pi|_{{\pi}^{-1}(U_{t_1})}\cln \pi^{-1}(U_{t_1})\to U_{t_1}$ be the restriction of $\pi$ on $ \pi^{-1}(U_{t_1})$. Since $X_t\cong \cX_{t_1}$ for all $t\in U_{t_1}$, the smooth family $\tilde{\pi}$ is locally trivial by \cite{FG}. Thus, for each $t\in U_{t_1}$, there exists a small neighborhood $V_t\subset U_{t_1}$ of $t$ such that
$$\tilde{\pi}^{-1}(V_t)\cong V_t\times \cX_{t_1}.$$
The Kodaira-Spencer map for $\pi|_{\tilde{\pi}^{-1}(V_t)}$
$$\rho_t\cln T_{V_t,t}\to H^1(\cX_t,T_{\cX_t})$$
is zero, and the differential $d\cP^{p,2}$ is zero on $V_{t}$ by \cite[(1.23) Theorem]{G68} or \cite[Theorem 10.4]{Vo}. Furthermore, we know that $ \cP^{p,2}$ is constant on $V_{t}$. Because $\cP^{p,2}$ is a holomorphic map, the preimage $(\cP^{p,2})^{-1}(\xi)$ is an analytic subset of $\Delta$ for any $\xi\in \mbox{Grass}(b^{p,2},H^2(\cX_{t_1},\CC))$. Since $V_{t}$ is contained in one preimage $(\cP^{p,2})^{-1}(\xi_0)$ by the assumption, the preimage  $ (\cP^{p,2})^{-1}(\xi_0)$ is the whole $\Delta$. So $\cP^{p,2}$ is a constant map.
Therefore
\beq\label{iden-4}
\psi_t^*H^{r,s}(\cX_t)=H^{r,s}(\cX_{t_1})
\eeq
for $t\in \Delta$ and $0\le r,s\le 2,\, r+s=2$. Thus
\beq\nonumber
F^p_t:=\bigoplus_{r+s=2,r\ge p}\psi_t^*H^{r,s}(\cX_t)=F^p_0
\eeq
defines a constant Hodge filtration on the space $H^{2}(\cX_{t_1},\CC)$ for $t\in\Delta$, Denote it by $F^p$.

For a real element $\alpha\in H^{2}(\cX,\CC)$, it induces a global holomorphic section $s_{\alpha}(t):=\overline{\alpha_t}=\overline{\imath_t^*\alpha}$ through the canonical quotient map $E^2\to E^2/F^1$.  The zero locus $Z(s_{\alpha})$ is defined as the set of points $t\in\Delta$ satisfying $\alpha_t\in H^{1.1}(\cX_t,\CC)$. It is an analytic subset of $\Delta$.

Let $\alpha_{t_0}=A\in H^{1,1}(\cX_{t_0},\CC)$ and $\alpha_{t_1}=\psi_{t_0}^*\alpha_{t_0}$. Define $\alpha:=T^*p_1^*\psi_{t_0}^*\alpha_{t_0}=T_{t_1}^*(\psi_{t_0}^*\alpha_{t_0})$. Then
\begin{eqnarray}\label{iden-3}
\psi_t^*\alpha_t&=&\psi_t^*\imath_t^*\left(T_{t_1}^*(\psi_{t_0}^*\alpha_{t_0})\right)\\
&=&\psi_t^*\kappa^*_t\left(\psi_{t_0}^*\alpha_{t_0}\right)\nonumber\\
&=&\psi_{t_0}^*\alpha_{t_0}\nonumber\\
&=&\alpha_{t_1}\nonumber
\end{eqnarray}
 for $t\in \Delta$. Hence the zero locus $Z(s_{\alpha})=\Delta$ by (\ref{iden-4}).
\end{proof}
\subsection{Construction of K\"ahler morphism}we denote by $\cE_{\cX}$ the structure sheaf of complex valued $C^{\infty}$ functions and by $\cE^{p,q}_{\cX}$ the sheaf of smooth $(p,q)$-forms on a manifold $\cX$. $\cE_{\cX,\RR}$ is defined as the subsheaf in $\cE_{\cX}$ of real $\CC$-valued smooth functions on $\cX$.
 Let $PH_{\cX,\RR}$ be the sheaf of pluriharmonic $\RR$-valued functions on $\cX$, and let $\mathcal{K}_{\cX,\RR}:=\cE_{\cX,\RR}/PH_{\cX,\RR}$ be the quotient sheaves. Thus a section $\Theta\in \Gamma(\cX,\mathcal{K}_{\cX,\RR})$ corresponds to an open covering $\{U_i\}$ of $\cX$ with $\varphi_i\in\cE_{X,\RR}(U_i)$ such that $\varphi_i-\varphi_j\in PH_{\cX,\RR}(U_i\cap U_j,)$, and we denote the section $\Theta$ by $\{(U_i,\varphi_i)\}$. Now we set
$$\omega:=\sqrt{-1}\partial\dbar\Theta\in H^2(\cX,\RR)$$ where
\beq\nonumber
\omega|_{U_i}=\sqrt{-1}\partial\dbar\Theta.
\eeq

\begin{lemm}\label{res1-2}
Let $\pi\cln \cX\to \Delta$ be a proper smooth morphism from a complex manifold $\cX$ to the unit disk $\Delta$ such that all the fibers $\cX_t$ are K\"ahler for $t\in \Delta$. Suppose that  $\alpha\in H^2(\cX,\RR)$ satisfies $\alpha_t\in H^{1,1}(\cX_t,\CC)$ for all $t\in\Delta$. Then there exists a section $\Theta\in \Gamma(\cX,\mathcal{K}_{\cX,\RR})$ such that
\begin{eqnarray*}
\{\sqrt{-1}\partial\dbar\Theta\}=\alpha\in H^2(\cX,\RR).
\end{eqnarray*}
\end{lemm}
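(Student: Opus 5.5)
We plan to use the exact sequence of sheaves on $\cX$
\beq\nonumber
0\to PH_{\cX,\RR}\to \cE_{\cX,\RR}\to \mathcal{K}_{\cX,\RR}\to 0,
\eeq
together with the description of $PH_{\cX,\RR}$ as the real parts of holomorphic functions: $0\to \RR\xrightarrow{\sqrt{-1}}\cO_{\cX}\xrightarrow{\mathrm{Re}} PH_{\cX,\RR}\to 0$. Since $\cE_{\cX,\RR}$ is fine, the first sequence gives an isomorphism $H^0(\cX,\mathcal{K}_{\cX,\RR})\to H^1(\cX,PH_{\cX,\RR})$ modulo global smooth functions. The second gives the long exact sequence
\beq\nonumber
H^1(\cX,PH_{\cX,\RR})\xrightarrow{\ \delta\ } H^2(\cX,\RR)\to H^2(\cX,\cO_{\cX}).
\eeq
Unwinding the connecting maps, the class obtained from $\Theta=\{(U_i,\varphi_i)\}$ through $\delta$ is, up to a universal constant, the de Rham class of $\sqrt{-1}\partial\dbar\Theta$. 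This is the standard Čech--de Rham comparison: $\partial\varphi_i-\partial\varphi_j=\partial(\varphi_i-\varphi_j)$ is holomorphic, and so on. It therefore suffices to show that the image of $\alpha$ in $H^2(\cX,\cO_{\cX})$ vanishes. Then $\alpha=\delta(c)$ for some $c$, and $c$ lifts to a section $\Theta$ because $H^1(\cX,\cE_{\cX,\RR})=0$.

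To show that the image of $\alpha$ in $H^2(\cX,\cO_{\cX})$ vanishes, we would compute $H^2(\cX,\cO_{\cX})$ via the Leray spectral sequence for $\pi$. Since $\Delta$ is Stein and $\pi$ is proper, $H^q(\Delta,R^p\pi_*\cO_{\cX})=0$ for $q>0$ (the $R^p\pi_*\cO_{\cX}$ are coherent by Grauert). Hence
\beq\nonumber
H^2(\cX,\cO_{\cX})\cong H^0(\Delta,R^2\pi_*\cO_{\cX}).
\eeq
Because the fibers are Kähler, the Hodge numbers $h^{0,2}(\cX_t)$ are constant. Grauert's base change theorem then shows that $R^2\pi_*\cO_{\cX}$ is locally free with fiber $H^2(\cX_t,\cO_{\cX_t})$. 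Thus a section of $R^2\pi_*\cO_{\cX}$ vanishes as soon as it vanishes pointwise in every fiber. The image of $\alpha$ at $t$ is the image of $\alpha_t$ under $H^2(\cX_t,\RR)\to H^2(\cX_t,\cO_{\cX_t})=H^{0,2}(\cX_t)$, which is the $(0,2)$-component of $\alpha_t$. This component is zero because $\alpha_t\in H^{1,1}(\cX_t,\CC)$ by hypothesis. The compatibility of the global map $H^2(\cX,\RR)\to H^2(\cX,\cO_{\cX})$ with restriction to fibers follows from naturality of $\RR\to\cO$ under $\imath_t$.

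We expect the main obstacle to be this fiberwise-to-global step: identifying the global obstruction in $H^2(\cX,\cO_{\cX})$ with a section of $R^2\pi_*\cO_{\cX}$, and justifying that it is determined by its values on fibers. That requires Stein vanishing on $\Delta$ to kill the higher Leray terms, and local freeness together with base change to pass from stalks to fibers. Once this is in place, the remaining work is bookkeeping of the connecting homomorphisms and of constants, to check that $\delta$ really produces $\{\sqrt{-1}\partial\dbar\Theta\}$.
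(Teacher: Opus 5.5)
Your proposal is correct and follows essentially the same route as the paper. It uses the sequences $0\to\RR\to\cO_\cX\to PH_{\cX,\RR}\to 0$ and $0\to PH_{\cX,\RR}\to\cE_{\cX,\RR}\to\mathcal{K}_{\cX,\RR}\to 0$, the identification $H^2(\cX,\cO_\cX)\cong H^0(\Delta,R^2\pi_*\cO_\cX)$ with local freeness and base change coming from the K\"ahler fibers, and fiberwise vanishing of the $(0,2)$-component. The differences are minor and in your favour: you lift $c\in H^1(\cX,PH_{\cX,\RR})$ to $\Theta$ directly via $H^1(\cX,\cE_{\cX,\RR})=0$ instead of pushing the second sequence forward to $\Delta$, and you justify the Leray degeneration (Grauert coherence plus Cartan's Theorem B) and the link between $\delta$ and $\{\sqrt{-1}\partial\dbar\Theta\}$, which the paper leaves implicit.
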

\begin{proof}

By the definition of $PH_{\cX,\RR}$, we have
\beq\label{real-short}
\begin{CD}
0@>>>  \RR @>>> \cO_\cX@>\text{2Im}>>PH_{\cX,\RR}@>>>0.
\end{CD}
\eeq
The Leray spectral sequence yields the following commutative diagram
\beq\nonumber
\begin{CD}
 @>>>H^1(\cX,PH_{\cX,\RR})@>>>H^2(\cX,\RR) @>^{\varphi}>> H^2(\cX,\cO_\cX)\\
 @.@.@VVV@V^{\cong}VV\\
 @.@.H^0(\Delta,R^2\pi_*\RR)@>^{\widetilde{\varphi}}>>H^0(\Delta,R^2\pi_*\cO_\cX).
\end{CD}
\eeq
The sheaves $R^2\pi_*\RR$ and $R^2\pi_*\cO_\cX$ are locally free, since $\cX_t$ are K\"ahler manifolds for all $t\in\Delta$. Then by base change theorem, we have
 \beq\nonumber
\begin{CD}
\iota_t^* R^2\pi_*\RR @>^{\cong}>>  H^2(\cX_t,\RR) \\
 @V^{\varphi_t}VV@V^{\rho_t}VV\\
 \iota_t^*R^2\pi_*\cO_{\cX}@>^{\cong}>>H^2(\cX_t,\cO_{\cX_t}),
\end{CD}
\eeq
where $\iota_t:t\hookrightarrow\Delta$. Because $\alpha_t\in H^{1,1}(\cX_t,\CC)$, we have $\varphi_t(\alpha_t)=0$.  Thus $\varphi(\alpha)=0$, so there exists a smooth $d$-closed $(1,1)$-form $\Omega$ such that $\{\Omega\}=\alpha$.

By the definition of $\mathcal{K}_{\cX,\RR}$, we have
\beq
\begin{CD}
0@>>>  PH_{\cX,\RR} @>>> \cE_{\cX,\RR}@>>>\mathcal{K}_{\cX,\RR}@>>>0.\nonumber
\end{CD}
\eeq
 Because $\cE_{\cX,\RR}$ is a soft sheaf,
\beq
R^k \pi_{*}\cE_{\cX,\RR}=0\nonumber
\eeq for $k\ge 1$, hence we have the following long exact sequence after pushforward.
\beq\label{sequence-3}
\begin{CD}
0@>>>  \pi_*PH_{\cX,\RR} @>>>  \pi_*\cE_{\cX,\RR}@>>>
\end{CD}
\eeq
\beq
\begin{CD}\nonumber
@>>> \pi_*\mathcal{K}_{\cX,\RR}@>>>R^1 \pi_*PH_{\cX,\RR} @>>>0
\end{CD}
\eeq

Let $\mathcal{M}$ be a sheaf satisfying
\beq\nonumber
\begin{CD}
0@>>>  \pi_*PH_{\cX,\RR} @>>>  \pi_*\cE_{\cX,\RR}@>>>\mathcal{M}@>>>0.
\end{CD}
\eeq
Since  $ \pi_*\cE_{\cX,\RR}$ is also soft, the cohomology groups satisfy  $H^k(\Delta,\mathcal{M})=0$ for $k\ge1$. By the short exact sequence
\beq\nonumber
\begin{CD}
0@>>> \mathcal{M}@>>> \pi_*\mathcal{K}_{\cX,\RR}@>>>R^1 \pi_*PH_{\cX,\RR} @>>>0,
\end{CD}
\eeq
we have
\beq\label{sequence-4}
\begin{CD}
\Gamma(\Delta, \pi_*\mathcal{K}_{\cX,\RR})@>>>\Gamma(\Delta,R^1 \pi_*PH_{\cX,\RR}) @>>> H^1(\Delta, \mathcal{M})=0\\
@V^{\cong}VV@V^{\cong}VV\\
\Gamma(\cX,\mathcal{K}_{\cX,\RR})@>>> H^1(\cX,PH_{\cX,\RR} )\nonumber.
\end{CD}
\eeq
Thus there exists a section $\Theta\in \Gamma(\cX,\mathcal{K}_{\cX,\RR})=\Gamma(\Delta,  \pi_*\mathcal{K}_{\cX,\RR})$ such that
\begin{eqnarray*}
\{\sqrt{-1}\partial\dbar\Theta\}=\{\Omega\}=\alpha\in H^2(\cX,\RR)
\end{eqnarray*}

\end{proof}

%

\begin{theo}\label{main-step}
Let $\pi\cln \cX\to \Delta$ be a proper smooth morphism from a complex manifold $\cX$ to the unit disc $\Delta$ such that all the fibers $\cX_t$ are K\"ahler for $t\in \Delta$.  Suppose that there exists an open dense subset  $W\subset \Delta$ such that $\cX_t\cong S$ for all $t\in W$. Then $\cX_t\cong S$ for all $t\in \Delta$ if $S$ is a non-uniruled K\"ahler manifold.
\end{theo}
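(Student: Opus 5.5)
We argue via a polarized family and the Hausdorffness of the moduli space $M$ from Section 2. Fix $t_0\in\Delta\setminus W$; we must show $\cX_{t_0}\cong S$.

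\textbf{Step 1: a class that is K\"ahler on $W$.} Since $W$ is open and all fibers over $W$ are isomorphic to $S$, the family is locally trivial over $W$ by \cite{FG}, and Lemma \ref{lemm4} applies with $t_1\in W$. Pick a K\"ahler class $A$ on $\cX_{t_1}$. Lemma \ref{lemm4} gives $\alpha\in H^2(\cX,\RR)$ with $\alpha_t\in H^{1,1}(\cX_t)$ for all $t$ and $\alpha_{t_1}=A$. The proof of Lemma \ref{lemm4} shows that the Hodge filtration is constant under the flat identification $\psi_t^*$, and that $\psi_t^*\alpha_t=\alpha_{t_1}$ is flat.

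Next we show $\alpha_t$ is K\"ahler on a large set. The set of $t$ where $\alpha_t$ is K\"ahler is open, by stability of K\"ahler forms together with Lemma \ref{res1-2}, which represents $\alpha$ by $\sqrt{-1}\partial\dbar\Theta$. For closedness along $W$ we use Theorem \ref{Kahler-1}. Irreducible analytic sets $Z\subset\cX_t$ are points of $C(\cX/\Delta)$. By Lemma \ref{Kahler} and Proposition \ref{prop-1}, each connected component of the cycle space meeting the fibers over $U_{t_1}$ surjects properly onto $\Delta$. Along such a component, the intersection numbers $\int_Z\alpha_t^{p}$ are locally constant, since cycles in a connected family are homologous and $\alpha$ is a global class.

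Positivity of $\int_Z\alpha_{t_1}^p$ near $t_1$ therefore propagates to all cycles lying in those components. Connecting fibers within $W$, we get that $\alpha_t$ lies in $\mathcal P$ for $t$ in a connected open dense $W'\supset U_{t_1}$. Continuity of $\alpha_t$ inside the constant space $H^{1,1}(\cX_{t_1})$ keeps $\alpha_t$ in the connected component $\mathcal K(\cX_t)$. Thus $(\pi|_{W'},\alpha)$ is a polarized family of non-uniruled K\"ahler manifolds, i.e.\ an object of $\mathcal{M}(W')$.

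\textbf{Step 2: constancy of the period map.} By Step 1 the moduli map $W'\to M$ is well defined, and it is constant on $W\cap W'$. The polarized isomorphism class is determined by $(S,\psi_t^*\alpha_t)$, and $\psi_t^*\alpha_t$ is the same class for all $t$. Because the isomorphisms over $W$ come from local trivializations transporting $\alpha$, these pairs are isomorphic as polarized manifolds.

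\textbf{Step 3: filling in the missing fiber.} Take $t_n\to t_0$ with $t_n\in W$. The key step, which we expect to be the main obstacle, is to show that $\alpha_{t_0}$ is still K\"ahler on $\cX_{t_0}$. The cycle-space argument handles every cycle of $\cX_{t_0}$ lying in a component that surjects onto $\Delta$. The worry is cycles of $\cX_{t_0}$ not deforming to nearby fibers. Here we would use non-uniruledness together with the properness in Proposition \ref{prop-1}, aiming to show that such jumping cycles cannot exist in a family whose general fibers are all isomorphic.

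Granting this, $(\cX_{t_0},\alpha_{t_0})$ is a limit of points of $M$ all equal to $[(S,\alpha_{t_1})]$. Since $M$ is a separated, hence Hausdorff, coarse moduli space \cite{Fu84,Sch84}, the limit must be the same point. Therefore $\cX_{t_0}\cong S$.

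If the K\"ahler property at $t_0$ resists, the fallback is to use the Hausdorffness of $M$ via Fujiki-style graph limits. One would take the isomorphisms $\cX_{t_n}\cong\cX_{t_{n'}}$, pass to Barlet limits of their graphs (proper by Proposition \ref{prop-1}), and use non-uniruledness to show the limit cycle is the graph of an isomorphism $\cX_{t_0}\cong S$.
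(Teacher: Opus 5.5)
There is a genuine gap, and it is the one you flag yourself in Step 3. Nothing in your proposal shows that $\alpha_{t_0}$ is K\"ahler. Without that, $(\cX_{t_0},\alpha_{t_0})$ is not an object of $\mathcal M$, and the Hausdorffness of $M$ has nothing to act on. This is not a formality. You normalize $\alpha$ by a K\"ahler class at a good point $t_1\in W$, so the cycle-space/Demailly--P\u{a}un reasoning of your Step 1 only controls $\alpha_t$ for $t$ off a countable union of proper analytic subsets of $\Delta$. Such exceptional points are exactly where the parallel transport of a K\"ahler class can fail to be K\"ahler. Either $\cX_{t_0}$ carries extra cycles, or $\alpha_{t_0}\in\mathcal P(\cX_{t_0})$ lies in a component other than $\mathcal K(\cX_{t_0})$; so by Theorem \ref{Kahler-1} even numerical positivity at $t_0$ would not suffice. ``Non-uniruledness rules out jumping cycles'' is a hope, not an argument. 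The graph-limit fallback stalls at the same place: extracting Barlet limits of graphs of $\cX_{t_n}\cong\cX_{t_{n'}}$ needs a uniform volume bound. In Fujiki's argument that bound comes from a polarization preserved by the isomorphisms and K\"ahler up to $t_0$, which is precisely what is missing. Separately, the complement of countably many proper analytic subsets of $\Delta$ is dense but need not be open, so your $W'$ is not obviously a domain for the moduli map.

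The missing idea, which is how the paper proceeds, is to put the polarization at the bad point. Since $\cX_{t_0}$ is K\"ahler by hypothesis, take $A$ to be a K\"ahler class on $\cX_{t_0}$ and apply Lemma \ref{lemm4} with this $t_0$. Its only input is local triviality somewhere, which $W$ and \cite{FG} supply.
\begin{enumerate}
\item Represent $\alpha$ by $\sqrt{-1}\partial\dbar\Theta$ as in Lemma \ref{res1-2}. Correct on $\cX_{t_0}$ by $\sqrt{-1}\partial\dbar\psi_{t_0}$ and extend $\psi_{t_0}$ smoothly; then $\alpha_s$ is K\"ahler for all $s$ in a small disc $\Delta_{t_0}$.
\item Density of $W$ gives a point $t_1\in W\cap\Delta_{t_0}$.
\item Non-uniruledness passes to all fibres over $\Delta_{t_0}$; the paper cites \cite[Proposition 11]{Fu84}. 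You also need this for $\cX_{t_0}$ to define a point of $M$, and your proposal never addresses it. This gives a holomorphic map $f\colon\Delta_{t_0}\to M$.
\item Near $t_1$ the family is holomorphically trivial and the trivialization transports $\alpha$, so $f$ is constant on an open set.
\item Since $f^{-1}(f(t_1))$ is analytic in the connected disc, $f$ is constant on $\Delta_{t_0}$, and hence $\cX_{t_0}\cong S$.
\end{enumerate}
This local argument makes your Step 1 unnecessary.
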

\begin{proof}Fixed a point $t_0\in \Delta\setminus W$. There exists a class $\alpha\in H^2(\cX,\RR)$ on $X$ such that its restriction satisfies $\alpha_t\in H^{1,1}(\cX_t,\CC)$ and $\alpha_{t_0}$ is a K\"ahler class, as constructed in Lemma \ref{lemm4}. Let $\widetilde{\Delta}_{t_0}$ be a small disk around $t_0$, and $\pi_{\widetilde{\Delta}_{t_0}}: \cX_{\widetilde{\Delta}_{t_0}}=\pi^{-1}(\widetilde{\Delta}_{t_0})\to \widetilde{\Delta}_{t_0}$ be the restricted map. Then the restricted class $\widetilde{\alpha}=\widetilde{j}^*\alpha\in H^2(\cX_{\widetilde{\Delta}_{t_0}},\RR)$ satisfies $\widetilde{\alpha}_{t_0}=\alpha_{t_0}$ where $\widetilde{j}: \cX_{\widetilde{\Delta}_{t_0}}\to \cX$ is the canonical open embedding. By the proof of Lemma \ref{res1-2}, there exists a section $\Theta\in \Gamma(\cX_{\widetilde{\Delta}_{t_0}},\mathcal{K}_{\cX_{\widetilde{\Delta}_{t_0}},\RR})=\Gamma(\widetilde{\Delta}_{t_0},  \pi_*\mathcal{K}_{\cX_{\widetilde{\Delta}_{t_0}},\RR})$ such that
\begin{eqnarray*}
\{\sqrt{-1}\partial\dbar\Theta\}=\widetilde{\alpha}\in H^2(\cX_{\widetilde{\Delta}_{t_0}},\RR)
\end{eqnarray*}

Since $\alpha_{t_0}$ is a K\"ahler class, there exists a smooth function $\psi_{t_0}$  such that $$\sqrt{-1}(\partial\dbar\Theta|_{\cX_{t_0}}+\partial\dbar\psi_{t_0})$$ is a K\"ahler form on the  fiber $\cX_{t_0}$. Choose a small ball $\Delta_{t_0}\subset \widetilde{\Delta}_{t_0}$ centered at ${t_0}$, since
$$ \pi^{-1}(\Delta_{t_0})\cong \cX_{t_0}\times \Delta_{t_0}$$
as differentiable manifolds, we can extend the smooth function $\psi_{t_0}$ on $\cX_{t_0}$ canonically to a smooth function $\widetilde{\psi}_{t_0}$ on $ \pi^{-1}(\Delta_{t_0})$. After shrinking the ball, we may assume that $$\sqrt{-1}(\partial\dbar\Theta+\partial\dbar\widetilde{\psi}_{t_0})|_{\cX_s}$$
is a K\"ahler form on the fiber $\cX_s$ for each $s\in \Delta_{t_0}$. Thus the restriction $\pi_{\Delta_{t_0}}:\cX_{\Delta_{t_0}}\to \Delta_{t_0}$ is a K\"ahler morphism. Because $W\subset \Delta$ is an open dense subset, there exists a point $t_1\in W\cap \Delta_{t_0}$. Since $\cX_{t_1}$ is not an un-ruled manifold by assumption, all the fibers $\cX_t$ are not un-ruled for $t\in \Delta_{t_0}$ by \cite[Propositon 11]{Fu84}. Thus the pair $(\pi_{\Delta_{t_0}}:\cX_{\Delta_{t_0}}\to {\Delta_{t_0}},j^*\alpha)$ induces a holomorphic map $f:\Delta_{t_0}\to M$, where $j: \cX_{\Delta_{t_0}}\to \cX$.

 Applying the diagram (\ref{diagram-1}) to $\cX_{\Delta_{t_0}}$, and keeping the notations the same.
\begin{eqnarray*}
\xymatrix{
\cX_t \ar[r]^{\imath_t} \ar@<.5ex>[rd]^{\kappa_t}  &  \cX_{\Delta_{t_0}}\ar[d]^{T_{t_1}}\ar[r]^(0.35){T}\ar[rd]^(0.35){\pi}|\hole  & \cX_{t_1}\times\Delta_{t_0} \ar[dl]^(0.6){p_1}\ar[d]^{p_2}\\
& \cX_{t_1}\ar@<.5ex>[lu]^{\psi_t}  & \Delta_{t_0}
}
\end{eqnarray*}

Since all the fibers over $W\cap\Delta_{t_0}$ are mutually biholomorphic, from \cite{FG} we know that there exists a small neighborhood $V_{t_1}\subset \Delta_{t_0}\cap W$ and a holomorphic isomorphism
\beq
H:\cX_{V_1}:=\pi^{-1}(V_1)\cong \cX_{t_1}\times V_{t_1}
\eeq
such that $pr_2\circ H=\pi_{V_1}:\cX_{V_1} \to V_1$, where $pr_2:X_{t_1}\times V_{t_1}\to V_{t_1}$, and $H|_{\pi^{-1}(t_1)}=Id:X_{t_1}\to X_{t_1}$. Let  $H_{t_1}:=pr_1\circ H: \cX\to X_{t_1}$ and $h_t: \cX_t\to \cX_{t_1}$ be the composition of $\imath_t: \cX_t\to \cX$ with $H_{t_1}$. The biholomorphisms $\{h_t\}$ depend holomorphically on $t$. Let $\chi_t:X_{t_1}\to X_t$ be the inverse biholomorphism of $h_t$.

\begin{eqnarray*}
\xymatrix{
\cX_{t} \ar[r] \ar@<.5ex>[rd]^{h_t}  &  \cX_{V_1}\ar[d]^{H_{t_1}}\ar[r]^(0.35){H}\ar[rd]^(0.35){\pi_{V_1}}|\hole  & \cX_{t_1}\times V_1 \ar[dl]^(0.6){p_1}\ar[d]^{p_2}\\
& \cX_{t_1}\ar@<.5ex>[lu]^{\chi_t}  & V_1
}
\end{eqnarray*}

The composition $\phi_t:= h_t\circ\psi_t$
\begin{eqnarray*}
\xymatrix{
\cX_{t_1} \ar[r]^{\psi_t}  &  \cX_{t} \ar[r]^{h_t} & \cX_{t_1}
}
\end{eqnarray*}
 is a self-diffeomorphism of $\cX_{t_1}$ for $t\in V_1$. Let $Diff(\cX_{t_1})$ be the be the Fr$\acute{e}$chet Lie group of smooth diffeomorphisms of $\cX_{t_1}$, and let $Diff^0(\cX_{t_1})\subset Diff(\cX_{t_1})$ be the connected component of the identity map. Because $\phi_t$  depends smoothly on $t\in V_1$ and $\phi_{t_1}=Id$, we have $\phi_t\in Diff^0(\cX_{t_1})$. Therefore
 \beq\nonumber
 \alpha_{t_1}=\phi_t^*\alpha_{t_1}=\psi_t^*\left(h_t^*\alpha_{t_1}\right).
 \eeq
Thus $h_t^*\alpha_{t_1}=\left(\psi_t^*\right)^{-1}(\alpha_{t_1})$. By formula (\ref{iden-3}), we have $\alpha_t=h_t^*\alpha_{t_1}$. This means $h_t:\cX_t\to \cX_{t_1}$ induces an isomorphism of polarized pairs $(\cX_t,\alpha_t)$ and $(\cX_{t_1},\alpha_{t_1})$. So $f(V_{t_1})$ is a point. Because $f: \Delta_{t_0}\to M$ is holomorphic, the preimage of $f$ at a point is an analytic set. So $f$ is a constant map. Consequently, $\cX_t\cong S$ for $t\in \Delta_{t_0}$ if $S$ is a non-uniruled manifold.
\end{proof}

\subsection{The proof of the main theorem}

%

\begin{theo} Let $\cX$ be a complex manifold, and let $\pi\cln \cX\to \Delta=\{t\in\CC||t|<1\}$ be a proper smooth morphism such that all the fibers are K\"ahler manifolds. Suppose that there exists a small neighborhood $U_{t_1}$ of $t_1\in\Delta$ such that $\cX_t\cong \cX_{t_1}$ for all $t\in U_{t_1}$. Then  $\cX_t\cong \cX_{t_1}$ for all $t\in\Delta$, provided that $\cX_{t_1}$ is not an uniruled K\"ahler manifold.
\end{theo}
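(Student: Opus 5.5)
The plan is to reduce the statement to Theorem \ref{main-step} by a connectedness argument on $\Delta$. Put $S:=\cX_{t_1}$ and
$$W:=\{t\in\Delta \mid \cX_s\cong S \text{ for all } s \text{ in some open neighbourhood of } t\}.$$
This set is open by definition, and it is nonempty because $U_{t_1}\subset W$. The first goal is to show that $W$ is closed in $\Delta$. Since $\Delta$ is connected, this gives $W=\Delta$, and hence $\cX_t\cong S$ for every $t$.

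To prove closedness, I will take $t_0\in\overline{W}\cap\Delta$ and show that $t_0\in W$. The idea is to run the argument in the proof of Theorem \ref{main-step} on a small disk $\Delta_{t_0}$ around $t_0$.

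First I need a suitable polarization class. Lemma \ref{lemm4} only requires local triviality near a single point, here $t_1$. It gives a class $\alpha\in H^2(\cX,\RR)$ with $\alpha_t\in H^{1,1}(\cX_t,\CC)$ for all $t$ and with $\alpha_{t_0}$ equal to a chosen Kähler class on $\cX_{t_0}$. In fact the proof of that lemma shows the Hodge filtration is constant on all of $\Delta$.

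Next I build a local potential and an openness argument for the Kähler condition. Lemma \ref{res1-2} gives a section $\Theta$ with $\{\sqrt{-1}\partial\dbar\Theta\}=\alpha$. Exactly as in the proof of Theorem \ref{main-step}, I correct $\Theta$ by a fibrewise potential $\widetilde\psi_{t_0}$ and then shrink the disk. The result is that $\pi_{\Delta_{t_0}}$ is a Kähler morphism with each class $\alpha_s$ Kähler, for $s\in\Delta_{t_0}$.

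Since $t_0\in\overline W$, the disk $\Delta_{t_0}$ meets the open set $W$ in some point $t_2$, and $\cX_{t_2}\cong S$ is non-uniruled. By \cite[Proposition 11]{Fu84}, after possibly shrinking $\Delta_{t_0}$, every fiber over $\Delta_{t_0}$ is non-uniruled. The polarized family therefore defines a holomorphic map $f\colon\Delta_{t_0}\to M$ into the separated coarse moduli space.

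Now I want to see that $f$ is constant on an open set. On a neighbourhood $V\subset W\cap\Delta_{t_0}$ of $t_2$ the family is holomorphically trivial by \cite{FG}. The argument with $\phi_t\in Diff^0(\cX_{t_2})$ from the proof of Theorem \ref{main-step} then shows $(\cX_t,\alpha_t)\cong(\cX_{t_2},\alpha_{t_2})$ for $t\in V$. So $f$ is constant on $V$. Since $\Delta_{t_0}$ is connected and $f^{-1}(f(t_2))$ is an analytic subset containing an open set, $f$ is constant on all of $\Delta_{t_0}$. Hence every fiber over $\Delta_{t_0}$ is isomorphic to $\cX_{t_2}\cong S$, and in particular $t_0\in W$.

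The main obstacle is the step where $f$ is defined and where "constant in moduli" is turned into "isomorphic fibers" at $t_0$ itself. This requires $\cX_{t_0}$ to belong to the moduli problem, which in turn needs that $\alpha_{t_0}$ is Kähler and that $\cX_{t_0}$ is non-uniruled. Non-uniruledness is the subtle point: uniruledness might a priori jump at a special fiber. I would first try to handle it through closedness of the uniruled locus via properness of the relative Barlet space (Proposition \ref{prop-1} and Lemma \ref{Kahler}). A covering family of rational curves on $\cX_{t_0}$ gives a component $A$ of $C(\cX/\Delta)$ through $t_0$. The aim is to show $\mu(A)$ contains a neighbourhood of $t_0$, or else to use that uniruledness is open as well as closed in smooth Kähler families, so that it would propagate to $\cX_{t_2}$, a contradiction. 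If that fails, I will rely directly on \cite[Proposition 11]{Fu84} as stated in the proof of Theorem \ref{main-step}.

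Finally, separatedness of $M$ is exactly what lets $f(t_0)$ equal the common value $f(t_2)$, so that $\cX_{t_0}\cong S$.
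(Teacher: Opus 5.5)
Your proposal is correct and takes a genuinely different, shorter route than the paper. It relies on the same inputs the paper uses: Lemma~\ref{lemm4}, Lemma~\ref{res1-2}, \cite{FG}, \cite[Proposition 11]{Fu84} and the separated coarse moduli space $M$.

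The paper argues globally, in four stages:
\begin{enumerate}
\item It fixes one class $\alpha$ with $\alpha_{t_1}$ K\"ahler, and removes a countable set $\Sigma$ using properness of the relative Barlet space (Proposition~\ref{prop-1}, Lemma~\ref{Kahler}).
\item A Demailly--P\u{a}un type argument (cycle classes transported by $\nabla^{GM}$, then Theorem~\ref{Kahler-1}) shows that $\alpha_t$ is K\"ahler on an open set $W$ with countable complement.
\item The moduli map $W\to M$ gives $\cX_t\cong\cX_{t_1}$ for all $t\in W$.
\item Only at the end does it invoke Theorem~\ref{main-step} to cover the remaining points.
\end{enumerate}

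You noticed that the proof of Theorem~\ref{main-step} uses the density of $W$ only to find one point of $W$ inside the small disk $\Delta_{t_0}$. At each $t_0$ you re-choose the polarization via Lemma~\ref{lemm4}, which is possible because the Hodge filtration is constant on all of $\Delta$. The argument then applies at every boundary point of the open, nonempty locus where the family is locally isotrivial with fibre $S$, and connectedness of $\Delta$ finishes the proof. This makes Steps 1 and 2 of the paper unnecessary, and with them the Barlet-space properness and the Demailly--P\u{a}un/Collins--Tosatti input. What the paper's route gives in addition is a single class polarizing the family over a large open set, which the statement does not need.

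One correction to your ``main obstacle'' discussion. At $t_0$ you need that uniruledness of $\cX_{t_0}$ would spread to nearby fibres, i.e.\ openness of the uniruled locus. Your first idea, closedness of the uniruled locus, is the wrong direction. Properness of $\mu|_A$ does not give openness by itself either, since $\mu(A)$ could a priori be discrete; that would require deformation theory of free rational curves. So relying on \cite[Proposition 11]{Fu84}, exactly as the paper does in Theorem~\ref{main-step}, is the right choice, and it puts your argument on the same footing as the paper's.
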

\begin{proof}{\bf Step 1.} We prove that there exist a class $\alpha\in H^2(\cX,\RR)$, and a connected open dense subset $W\subset \Delta$ containing  $U_{t_1}$ such that the restrictions $\alpha_t\in H^{1,1}(\cX_t,\CC)$ are K\"ahler classes for $t\in W$, and the pair $(\pi_{W}:\cX_W=\pi^{-1}(W)\to W ,\alpha)$ forms a family of polarized K\"ahler manifolds.

By Proposition~\ref{prop-1}, the morphism $\mu:C(\cX/\Delta)\to \Delta$
is proper. Let $\Lambda$ be the set of connected components of $C(\cX/\Delta)$, and
$$\Sigma:=\bigcup_{A\in \{C\in\Lambda:\mu(C)\subsetneq \Delta\}} \mu(A)\subseteq \Delta.$$ By \cite[Theorem]{F0} $C(\cX/\Delta)$ has at most countably many components.  Since $\mu$ is proper, $\Sigma$ is an union of countably many proper analytic subsets of $\Delta$.
Let $\widetilde{W}=\Delta\setminus \Sigma$, then $\widetilde{W}\subset \Delta$ is dense, and it is arcwise connected by piecewise smooth analytic arcs.  So $U_{t_1}\cap \widetilde{W}\neq \emptyset$, moreover we have $U_{t_1}\subset \widetilde{W}$ by Lemma \ref{Kahler}.

Let $\alpha\in H^2(\cX,\RR)$ be the class as constructed in Lemma \ref{lemm4}. Its restriction $\alpha_t\in H^{1,1}(\cX_t,\CC)$ for all $t\in\Delta$ and $\alpha_{t_1}$ is a K\"ahler class. Using an argument analogous to that of Demailly-P$\breve{\textrm{a}}$un in the proof of \cite[Theorem 0.8]{DP04}, we will prove that $\alpha_t$ are numerical positive for all $t\in \widetilde{W}$ as follows.

Since $\alpha_{t_1}$ is a K\"ahler class, we can take $\omega_{t_1}$ to be a K\"ahler form in the class of $\alpha_{t_1}$. Then
\beq\nonumber
(\alpha_{t_1})^k\cdot[Z]=\int_{Z} \omega_{t_1}^k>0.
\eeq
As discussed in \cite[$\S$ 5]{DP04}, there is a commutative diagram
$$
\xymatrix{
C^{k}({\cX}/\Delta) \ar[rr]^{\ \ \ \ p} \ar[dr]
                &  &    R^{2(n-k)}{\pi}_{*}\mathbb{Z}_{{\cX}} \ar[dl]    \\
                & \Delta                },
$$
where  the map $p$ sends a compact analytic cycle Z of dimension $k$ in $\cX_t$ to the cohomology class $[Z]\in H^{2(n-k)}(\cX_t,\mathbb{Z})$.
For each $k$-dimensional analytic cycle $Z\subset \cX_{t}$, where $1\le k\le n$, there exists a family of cohomology classes $\zeta_{Z,s}\in H^{2(n-k)}(\cX_{s},\CC)$  such that $\zeta_{Z,t}=[Z]$ and $\nabla^{2(n-k),GM} \zeta_{Z}=0$. When $Z$ varies in $\cX_{t}$, all the associated families together generate all classes of analytic cycles in $\cX_{t}$ for each $t \in \widetilde{W}$. So
\beq\nonumber
\frac{d}{ds}((\alpha_{s})^k\cdot\zeta_{Z,s})=k\nabla^{2,GM}\alpha_{s}\cdot(\alpha_{s})^{k-1}\cdot\zeta_{Z,s}=0\quad \mbox{for}\quad s\in \Delta.
\eeq
Thus $(\alpha|_{s})^k\cdot\zeta_{Z,s}$ is a constant function of $s\in \Delta$, and
\beq\label{equ-2}\nonumber
\int_{Z_{t}}(\alpha_{t})^k=(\alpha_{t})^k\cdot\zeta_{Z,t}=(\alpha_{t_0})^k\cdot\zeta_{Z,t_0}>0.
\eeq
Therefore $\alpha_{t}$ is numerically positive for all $t\in \widetilde{W}$.

By formula (\ref{iden-4}), the constant sheaves $H_{\Delta}^{p,q}:=\Delta\times H^{p,q}(\cX_{t_1},\CC)\subset R^k\pi_*\CC$ form a series of constant subsheaves. By Hodge decomposition, we know that
 \beq
 \nabla^{2,GM}=\text{diag} \{\nabla^{2,0,GM},\nabla^{1,1,GM},\nabla^{0,2,GM}\},
 \eeq
where $\nabla^{p,q,GM}$ is the restriction of $\nabla^{2,GM}$ on the subsheaf $\cup_{t\in\Delta}H^{p,q}(\cX_t,\CC)$. Let $s(t)=\alpha_t$ be the section of $R^2\pi_*\CC$ induces by $\alpha$. Then $\nabla^{1,1,GM} s= \nabla^{2,GM}s=0$. Thus $s$ is a flat section with respect to $\nabla^{1,1,GM}$. Because $\alpha_{t_1}$ is a K\"ahler class on $\cX_{t_1}$, the class $\alpha_{t}$ is K\"ahler for each $t\in \widetilde{W}$ by \cite[Theorem 0.9]{DP04}, which asserts that the K\"ahler cone is invariant under parallel transport with respect  $\nabla^{1,1,GM}$.

 Since $\pi$ is smooth and $\Delta$ is contractible, from spectral sequence we have
\beq\nonumber
\begin{CD}
 @>>>H^1(\cX,PH_{\cX,\RR})@>>>H^2(\cX,\RR) @>^{\varphi}>> H^2(\cX,\cO_\cX)\\
 @.@.@V^{\cong}VV@V^{\cong}VV\\
 @.@.H^0(\Delta,R^2\pi_*\RR)@>^{\widetilde{\varphi}}>>H^0(\Delta,R^2\pi_*\cO_\cX)
\end{CD}
\eeq
 Because $\alpha_t\in H^{1,1}(\cX_t,\CC)$ for all $t\in\Delta$, we have $\widetilde{\varphi}_t(\alpha_t)=0$ for $t\in\Delta$. Thus $\varphi(\alpha)=0$. By the proof of Lemma \ref{res1-2}, there exists a section $\Theta\in \Gamma(\cX,\mathcal{K}_{\cX,\RR})$ such that
\begin{eqnarray*}
\{\sqrt{-1}\partial\dbar\Theta\}=\alpha\in H^2(\cX,\RR).
\end{eqnarray*}

 For each $t\in \widetilde{W}$, $\alpha_{t}$ is a K\"ahler class. Hence there exists a smooth function $\psi_t$ on $\cX_t$ such that $\sqrt{-1}\partial\dbar\Theta|_{\cX_t}+\partial\dbar\psi_t$ is a K\"ahler form. Choosing a small ball $V_t$ centered at $t$, then $\pi^{-1}(V_t)\cong \cX_t\times V_t$ as differentiable manifolds. The smooth function $\psi_t$ can be canonically extended to a smooth function $\widetilde{\psi_t}$ on $\pi^{-1}(V_t)$. After shrinking the ball, we can assume that all the forms $(\sqrt{-1}\partial\dbar\Theta+\partial\dbar\widetilde{\psi_t})|_{\cX_s}$ on  $\cX_s$ are  K\"ahler  for $s\in V_t$. So
$$W:=\{t\in\Delta:\alpha_{t}\mbox{ is a K\"ahler class}\}$$ is an open subset of $\Delta$ whose complement has at most countably many points, moreover $W$ is a pseudoconvex domain. The pair $(\pi_{W}:\cX_W=\pi^{-1}(W)\to W ,\alpha)$ forms a family of polarized K\"ahler manifolds.

{\bf Step 2.} We prove $\cX_t\cong \cX_{t_1}$ for all $t\in W$ in this step.

By \cite{FG} there exists a small neighborhood $V_{t_1}\subset U_{t_1}$ and a holomorphic isomorphism
\beq
H:\cX_{V_1}:=\pi^{-1}(V_1)\cong \cX_{t_1}\times V_{t_1}
\eeq
such that $pr_2\circ H=\pi_{V_1}:\cX_{V_1} \to V_1$, where $pr_2:X_{t_1}\times V_{t_1}\to V_{t_1}$, and $H|_{\pi^{-1}(t_1)}=Id:X_{t_1}\to X_{t_1}$. Let  $H_{t_1}:=pr_1\circ H: \cX\to X_{t_1}$ and $h_t: \cX_t\to \cX_{t_1}$ be the composition of $\imath_t: \cX_t\to \cX$ with $H_{t_1}$. The biholomorphisms $\{h_t\}$
 depend holomorphically on $t$. Let $\chi_t:X_{t_1}\to X_t$ be the inverse biholomorphism of $h_t$.

\begin{eqnarray*}
\xymatrix{
\cX_{t} \ar[r] \ar@<.5ex>[rd]^{h_t}  &  \cX_{V_1}\ar[d]^{H_{t_1}}\ar[r]^(0.35){H}\ar[rd]^(0.35){\pi_{V_1}}|\hole  & \cX_{t_1}\times V_1 \ar[dl]^(0.6){p_1}\ar[d]^{p_2}\\
& \cX_{t_1}\ar@<.5ex>[lu]^{\chi_t}  & V_1
}
\end{eqnarray*}

Because $\cX_{t_1}$ is not an un-ruled manifold, all the fibers $\cX_t$ are not un-ruled for $t\in W$ by \cite[Propositon 11]{Fu84}. Thus the pair $(\pi_{W}:\cX_{W}\to W,\alpha)$ induces a holomorphic map $f:W\to M$, where $j: \cX_{W}\to \cX$.

The composition $\phi_t:= h_t\circ\psi_t$
\begin{eqnarray*}
\xymatrix{
\cX_{t_1} \ar[r]^{\psi_t}  &  \cX_{t} \ar[r]^{h_t} & \cX_{t_1}
}
\end{eqnarray*}
 is a self-diffeomorphism of $\cX_{t_1}$ for $t\in V_1$. Let $Diff(\cX_{t_1})$ be the be the Fr$\acute{e}$chet Lie group of smooth diffeomorphisms of $\cX_{t_1}$, and let $Diff^0(\cX_{t_1})\subset Diff(\cX_{t_1})$ be the connected component of the identity map. Because $\phi_t$ is smooth dependent on $t\in V_1$ and $\phi_{t_1}=Id$, so $\phi_t\in Diff^0(\cX_{t_1})$. Therefore
 \beq\nonumber
 \phi_t^*\alpha_{t_1}=\psi_t^*\left(h_t^*\alpha_{t_1}\right)=\alpha_{t_1}.
 \eeq
Thus $h_t^*\alpha_{t_1}=\left(\psi_t^*\right)^{-1}(\alpha_{t_1})$. By formula (\ref{iden-3}), we have $\alpha_t=h_t^*\alpha_{t_1}$. This means that $h_t:\cX_t\to \cX_{t_1}$ induces an isomorphism of polarized pairs $(\cX_t,\alpha_t)$ and $(\cX_{t_1},\alpha_{t_1})$. It implies that $f(V_{t_1})$ is a point. Because $f: W\to M$ is holomorphic, the preimage of $f$ at a point is an analytic subset. So $f$ is a constant map. Thus $\cX_t\cong \cX_{t_1}$ for $t\in W$ if $\cX_{t_1}$ is a non-uniruled manifold.

{\bf Step 3.} In this step, we will finish the proof.

By Theorem \ref{main-step},  $\cX_t\cong \cX_{t_1}$ for all $t\in\Delta$, provided that $\cX_{t_1}$ is not an uniruled K\"ahler manifold.

\end{proof}

\bibliographystyle{amsplain}

\end{document}